\algnewcommand\algorithmicinput{\textbf{Initialization:}}
\algnewcommand\init{\item[\algorithmicinput]}
\algnewcommand\algorithmicevol{\textbf{Evolution:}}
\algnewcommand\evol{\item[\algorithmicevol]}
\algnewcommand\algorithmicawake{\textsf{\textit{{AWAKE}}}}
\algnewcommand\awake{\item[\algorithmicawake]}
\algnewcommand\algorithmicidle{\textsf{\textit{{IDLE}}}}
\algnewcommand\idle{\item[\algorithmicidle]}
\newcommand{\until}[1]{\{1,\ldots,#1\}}
\newcommand{\DD}{\Pi}
\newcommand{\EE}{\mathcal{E}}
\newcommand{\GG}{\mathcal{G}}
\newcommand{\NN}{\mathcal{N}}
\newcommand{\mS}{\mathcal{S}}
\newcommand{\UU}{{D}}
\newcommand{\VV}{\mathcal{V}} 
\newcommand{\WW}{{W}}
\newcommand{\E}{\mathbb{E}}
\newcommand{\gs}{\textsl{g}}
\newcommand{\ff}{f_\textsl{best}}
\newcommand{\lxi}{x_j^t{\mid}_i}
\newcommand{\bG}{\bar{G}}
\newcommand{\m}{\mathop{\textrm{minimize}}}
\newcommand{\R}{\mathbb{R}}
\newcommand{\x}{\bm{x}}
\newcommand{\prox}{\text{prox}}
\newcommand{\diag}[1]{\text{diag}\{#1\}} 
\newcommand{\NNio}{\NN_{i,out}}
\newcommand{\NNii}{\NN_{i,in}}
\newcommand{\tNNii}{\NN_{i,in}}
\newcommand{\DBS}{Distributed Block Subgradient Method }
\newcommand{\DBP}{Distributed Block Proximal Method }
\newcommand{\DBPnospace}{Distributed Block Proximal Method}
\newcommand{\zl}{z_{\ell}}
\newcommand{\bzl}{\bar{z}_{\ell}}
\newcommand{\bx}{\bar{x}}
\newcommand{\tWl}{{W}_{\ell}}
\newcommand{\Pl}{\Phi_{W_\ell}}
\newcommand{\bS}{\bar{S}}
\newcommand{\bR}{\bar{R}}
\newcommand{\bQ}{\bar{Q}}
\newcommand{\pushright}[1]{\ifmeasuring@#1\else\omit\hfill$\displaystyle#1$\fi\ignorespaces}
\newcommand{\pushleft}[1]{\ifmeasuring@#1\else\omit$\displaystyle#1$\hfill\fi\ignorespaces}
\newcommand\oprocendsymbol{\hbox{$\square$}}
\newcommand\oprocend{\relax\ifmmode\else\unskip\hfill\fi\oprocendsymbol}
\def\blfootnote{\xdef\@thefnmark{}\@footnotetext}
\theoremstyle{plain}
\newtheorem{theorem}{Theorem}
\newtheorem{corollary}{Corollary}
\newtheorem{lemma}{Lemma}
\theoremstyle{definition}
\newtheorem{assumption}{Assumption}
\theoremstyle{remark}
\newtheorem{remark}{Remark}
\title{Randomized Block Proximal Methods \\for Distributed Stochastic Big-Data Optimization}
\author{Francesco Farina\thanks{F. Farina is in the Artificial Intelligence and Machine Learning group at GSK. However, this work was carried out while the author was at the Department of Electrical, Electronic and Information Engineering ``G. Marconi'', Universit{\`a} di Bologna, Bologna, Italy. email: \texttt{francesco.x.farina@gsk.com}} ,
Giuseppe Notarstefano\thanks{G. Notarstefano is with the Department of Electrical, Electronic and Information Engineering ``G. Marconi'', Universit{\`a} di Bologna, Bologna, Italy. email: \texttt{giuseppe.notarstefano@unibo.it}}}
\date{}
\begin{document}
\maketitle

% ABSTRACT
\begin{abstract}
\blfootnote{A preliminary version of this work has appeared in the Proceedings of the 58-th Control and Decision Conference (CDC 2019)~\cite{farina2019subgradient}. The current article consider a more general problem set-up, namely a constrained stochastic optimization one. Also the proposed algorithm is more general since local updates are based on generic proximal mappings, agents can be awake or idle at each iteration, blocks can be drawn according to locally defined (possibly non uniform) probability distributions and local stepsize sequences can be employied. Furthermore, the convergence analysis is also carried out under the assumption of constant stepsizes and an explicit convergence rate is provided. Finally, all the complete theoretical proofs are reported.}
\blfootnote{This result is part of a project that has received funding from the European Research Council (ERC) under the European Union's Horizon 2020 research and innovation programme (grant agreement No 638992 - OPT4SMART).}
\blfootnote{\textcopyright  2020 IEEE.  Personal use of this material is permitted.  Permission from IEEE must be obtained for all other uses, in any current or future media, including reprinting/republishing this material for advertising or promotional purposes, creating new collective works, for resale or redistribution to servers or lists, or reuse of any copyrighted component of this work in other works.}
\blfootnote{Digital Object Identifier \url{10.1109/TAC.2020.3027647}}

  In this paper we introduce a class of novel distributed algorithms for solving stochastic big-data convex optimization problems over directed graphs. In the addressed set-up, the dimension of the decision
  variable can be extremely high and the objective function can be nonsmooth.
  The general algorithm consists of two main steps: a consensus step and an
  update on a single block of the optimization variable, which is then broadcast
  to neighbors. Three special instances of the proposed method, involving
  particular problem structures, are then presented.
  In the general case, the convergence of a dynamic consensus algorithm over
  random row stochastic matrices is shown. Then, the convergence of the proposed
  algorithm to the optimal cost is proven in expected value. Exact convergence
  is achieved when using diminishing (local) stepsizes, while approximate convergence is attained when constant
  stepsizes are employed. The convergence
  rate is shown to be sublinear and an explicit rate is provided in the case of constant stepsizes.
  Finally, the algorithm is tested on a distributed classification problem, first on synthetic data and, then, on a real, high-dimensional, text dataset.
\end{abstract}

% END ABSTRACT

\section{Introduction} 
Recent years have witnessed a steadily growing interest in distributed learning and control over networks consisting of multiple smart agents.
Several problems arising in this scenario can be formulated as distributed optimization problems which need to be solved by networks of agents.
In this paper, we focus on the following stochastic \emph{big-data} convex optimization problem, which is to be solved over a network of $N$ interconnected agents,
\begin{equation*} %\label{pb:problem}
    \begin{aligned}
        & \m_{x\in X}
        & & \sum_{i=1}^N \E[h_i(x;\xi_i)],
    \end{aligned}
\end{equation*}
where $X\subseteq \R^n$ is a convex set, $\xi_i\in\R$ is a random variable and the functions $h_i:\R^n\to\R$ are continuous, convex and possibly non smooth.
The optimization variable $x$ is extremely high dimensional and with block
  structure, i.e., $n=\sum_{\ell=1}^B n_\ell$ with $n_\ell$ being the dimension
  of the $\ell$-th block and $B\gg 1$ the number of blocks. 
Regarding the role of stochastic functions in the considered set-up, it is
    worth stressing that they allow agents to deal with various type of problems.
     Among the
others, the case of learning problems involving massive datasets is of
particular interest. In this case, the local objective function typically has the form
$f_i(x)=\frac{1}{m_i}\sum_{r=1}^{m_i} h_i(x, \xi_i^r)$, where $\xi_i^r$,
$r=1,\dots,m_i$, 
are samples uniformly drawn from a certain dataset consisting of $m_i$
elements. When $m_i$ is very large it could be computationally infeasible to
compute a subgradient of the entire $f_i$. On the other side, given $\xi_i^r$,
computing a subgradient of $h_i(x, \xi_i^r)$ is much simpler. Problems of this type are
often referred to as sample average approximation
problems~\cite{kleywegt2002sample}. Other relevant classes of problems include
those of dynamic, or online, optimization problems in which samples generating
functions $h_i$ are processed as they become available~\cite{xiao2010dual,tsianos2013consensus} and settings in which only noisy
subgradients of the objective functions are
available~\cite{ram2010distributed}.

Applying classical distributed algorithms to big-data problems may be infeasible due, e.g., to limitations in the communication bandwidth. In fact, they would require agents to communicate a prohibitive amount of data due to the high dimension of the decision variable.
This calls for tailored distributed algorithms for big-data optimization problems in which only few blocks of the entire (local) solution estimate are sent to neighbors. 
Thus, the literature relevant to this paper can be divided in three main
(partially overlapping) categories: stochastic optimization methods, block
coordinate algorithms and primal distributed algorithms.

\emph{Stochastic optimization algorithms}: To the best of our knowledge, the
first work dealing with stochastic problems has
been~\cite{robbins1951stochastic}. Since this seminal work, there has been a
steady increase in the interest for this type of problems, and algorithms for
solving them (see, e.g.,~\cite{kushner2003stochastic} and references
therein). Among the others, stochastic approximation approaches were presented
in~\cite{nemirovski2009robust,ghadimi2012optimal} and stochastic mirror descent
algorithms have been studied
in~\cite{nedic2014stochastic,dang2015stochastic}. Stochastic gradient
descent algorithms are particularly appealing in learning problems (see,
e.g.,~\cite{bottou2010large}) in which extremely large datasets are involved,
since they allow for batch processing of the data.

\emph{Block coordinate algorithms}: Centralized block coordinate methods have
a long history (see, e.g.,~\cite{beck2013convergence} for a survey). They were
firstly designed for solving smooth problems, but, in the last years, an
increasing number of results have been provided to deal with nonsmooth objective
functions. Two main rules for selecting the block to be updated have been
studied: cyclic (or almost cyclic; see, e.g.,~\cite{xu2017globally}) or
random. In the last case, randomized block coordinate algorithms have been proposed~\cite{richtarik2014iteration,zhao2014accelerated,lu2015complexity,chen2016accelerated,zhang2016accelerated}. Particularly
  relevant for this paper is the work in~\cite{dang2015stochastic}, in which a
  stochastic block mirror descent method with random block updates is proposed.
Parallel block coordinate methods are also a well established strand of
optimization literature, see, e.g.,~\cite{wright2015coordinate}. The work
in~\cite{nesterov2012efficiency} applies to smooth convex functions, while the
ones in~\cite{facchinei2015parallel,richtarik2016parallel,necoara2016parallel} face up composite
optimization problems.
A unified framework for nonsmooth optimization using
block algorithms has been studied in~\cite{razaviyayn2013unified} for centralized and parallel set-ups.

\emph{Distributed algorithms}:
% Distributed algos
Many distributed optimization algorithms have been proposed in recent
years. In~\cite{tsitsiklis1986distributed} a distributed gradient descent
algorithm was firstly introduced, which is capable to deal with both
deterministic and stochastic convex optimization problems.  When the problems to
be solved involve nonsmooth objective functions, subgradient-based algorithms
have been designed. First examples of such algorithms appeared
in~\cite{nedic2009distributed,johansson2009randomized,nedic2010constrained},
while recent advances involve more sophisticated protocols, to deal with
  directed
  communication~\cite{nedic2015distributed,xi2017distributed,liu2017convergence,wang2018distributed,li2019distributed}.
Many distributed algorithms involving proximal operations have also been
  proposed (see, e.g.,\cite{parikh2014proximal} for a survey on proximal
  algorithms). Among the others, a proximal gradient method was developed
  in~\cite{chen2012fast} to deal with unconstrained problems, while
  in~\cite{duchi2012dual,margellos2018distributed} proximal algorithms have been
  presented to deal with constrained optimization.
The stochastic setting has also been
  treated~\cite{ram2010distributed,agarwal2011distributed,srivastava2011distributed,rabbat2015multi,nedic2016stochastic,lan2017communication,li2018stochastic}. In
  particular, a stochastic subgradient projection algorithm appeared
  in~\cite{ram2010distributed}, while a stochastic distributed mirror descent
  was proposed in~\cite{li2018stochastic}.
Distributed algorithms over random networks are also relevant to this
paper. In~\cite{liu2011consensus}, consensus protocols were studied using random
row-stochastic matrices, while in~\cite{lobel2011distributed} a distributed
subgradient method over random networks with underlying doubly stochastic
matrices has been proposed.
%
% Distributed block algo
Distributed algorithms dealing with block communication have started to appear
only recently. A block gradient tracking scheme has been presented
in~\cite{notarnicola2018distributed} for nonconvex problems with nonsmooth
regularizers, while~\cite{FARINA2019243} proposes an asynchronous algorithm for
nonconvex optimization based on the method of multipliers, which is
implementable block-wise. 
A randomized block-coordinate algorithm for smooth problems with common cost function and linear
constraints has been presented in~\cite{necoara2013random}.

\vspace{1ex}
% Contribution
In this paper, we introduce the \DBPnospace, which models a class of
distributed proximal algorithms, with block communication, for solving
stochastic big-data convex optimization problems with nonsmooth objective
function. 
The communication network is modeled as a directed graph admitting a
doubly stochastic weight matrix. At each iteration, each node is awake with a
certain probability (and idle otherwise). If awake, it performs a consensus
step, computes a stochastic subgradient of a local objective function, and
performs a proximal-based update (depending on the computed subgradient and on
a local stepsize) on a randomly chosen block only. Then, it exchanges with its
neighbors only the updated block of the decision variable, thus requiring a
small amount of communication bandwidth.
  We also present three special instances of the proposed algorithm. In the first
  one, the proximal mapping is based on the squared 2-norm, thus leading to
  explicit block subgradient steps. In the other two, smooth objective functions
  and separable (possibly nonsmooth) ones are considered. In both these cases
  the computational load at each node in the network can be further reduced with
  respect to the general algorithm.
  We point out that no global parameter is required in the evolution of the
  algorithms. In fact, each node is awake and selects blocks with \emph{locally
    defined} probabilities, and uses \emph{local} stepsizes.
The block-wise updates and the communication of a single block induce nontrivial technical challenges in the algorithm analysis.
On this regard, it is worth noting that, despite the double stochasticity of the
weight matrix, the consensus step on each block turns out to be performed using
a sequence of random row-stochastic matrices.
The analysis for the \DBP is carried out in two parts. First, the convergence
properties of a dynamic block consensus protocol over random graphs are studied,
by building on block-wise, perturbed consensus dynamics with random matrices. A
bound on the expected distance from consensus is provided, which is then
specialized to the cases of constant and diminishing stepsizes
respectively. 
Then, a bound on the
expected distance from the (globally) optimal cost is provided by properly
bounding errors due to the block-wise update and exploiting the probability of
drawing blocks. When constant stepsizes are used, approximate convergence (with
a constant error term) to the optimal cost is proven in expected value, while
asymptotic exact convergence is reached for diminishing stepsizes. 
Finally, we provide an explicit convergence rate for the proposed algorithm when using constant stepsizes. 
The rate is sublinear, even though a linear term is present, which can be predominant in the first iterations. 

\vspace{1ex}

The paper is organized as follows. The problem set-up is introduced in
Section~\ref{sec:problem} along with some preliminary results. In
Section~\ref{sec:algorithm}, the \DBP is presented and three special algorithm
instances are given in Section~\ref{sec:special}. Then, the algorithm is
analyzed in Section~\ref{sec:analysis}. Finally, a numerical example involving a distributed classification problem over a syntetic and a real, high-dimensional, text document datasets is
dispensed in Section~\ref{sec:experiment} and some conclusions are drawn in
Section~\ref{sec:conclusion}.

\section{Set-up and preliminaries}\label{sec:problem}
\subsection{Notation and definitions}
Given a vector $x\in\R^n$, we denote by $x_\ell$ the
$\ell$-th block of $x$, i.e., given a partition of the identity matrix
$I=[U_1,\dots,U_B]$, with $U_\ell\in\R^{n\times n_\ell}$ for all $\ell$ and
$\sum_{\ell=1}^B n_\ell=n$, it holds $x = \sum_{\ell=1}^B U_\ell x_\ell$
and $x_\ell=(U_\ell)^\top x$. Moreover we denote by $\|x\|$ the 2-norm of $x$. Given a vector $x\in\R^n$, with scalar blocks, we define
$$
d(x)\triangleq\max_{1\leq \ell \leq n}x_\ell-\min_{1\leq \ell\leq n}x_\ell.
$$
Given a vector $x_i\in\R^n$, we denote by $x_{i,\ell}$ the $\ell$-th block of
$x_i$. Moreover, given a constant $c$, and an index $t$, we denote by $(c)^t$,
$c$ to the power of $t$, while given a sequence $\{x^t\}_{t\geq 0}$, we denote
by $x^t$ the $t$-th element of the sequence.  Given a matrix $A$, we denote by
$a_{ij}$ (or $[A]_{ij}$) the element of $A$ located at row $i$ and column $j$. Given two matrices $A$
and $B$, we write $A\geq B$ if $a_{ij}\geq b_{ij} $ for all $i$ and $j$. Given
two vectors $a,b\in\R^n$ we
denote %
by $\langle a,b\rangle$ their scalar product. Given a discrete random variable
$r\in\until{R}$, we denote by $P(r=\bar{r})$ the probability of $r$ to be equal
to $\bar{r}$. Given a nonsmooth function $f$, we
denote by $\partial f(x)$ its subdifferential computed at $x$, and by
$\partial_{x_\ell} f(x)$ the subdifferential of $f$ with respect to the
$\ell$-th block of $x$.
 
We say that a directed
graph $\GG=(\VV,\EE)$ contains a spanning tree if for some $v\in\VV$ there
exists a directed path from the vertex $v$ to all other vertices $u\in\VV$. Given a nonnegative matrix $A$ and some $\delta\in(0,1)$, we denote by
$A_\delta$ the matrix whose entries are defined as
$$
[A_\delta]_{ij}=
\begin{cases}
  \delta,&\text{if }A_{ij}\geq\delta,\\
  0,&\text{otherwise}.
\end{cases}
$$
We say that $A$ contains a $\delta$-spanning tree if the graph induced by
$A_\delta$ contains a spanning tree. 

\subsection{Distributed stochastic optimization set-up}
As anticipated in the introduction, we consider the following optimization problem,
\begin{equation}\label{pb:problem}
    \begin{aligned}
        & \m_{x\in X}
        & & \sum_{i=1}^N \E[h_i(x;\xi_i)].
    \end{aligned}
\end{equation}
We recall that $\xi_i$ is a random variable, functions $h_i:\R^n\to\R$ are
continuous, convex and possibly nonsmooth for every $\xi_i$, $X\subseteq \R^n$ and
$n\gg 1$.  We let $f_i(x)=\E[h_i(x;\xi_i)]$ and $f(x)=\sum_{i=1}^N
f_i(x)$. Moreover, $x^\star\in\R^n$ is a solution of problem~\eqref{pb:problem}.
The optimization variable $x\in\R^n$ has a block structure, i.e.,
\[
  x=[x_1^\top,\dots,x_B^\top]^\top.
\]
with $x_\ell\in\R^{n_\ell}$ for all $\ell$ and $\sum_{\ell=1}^B n_\ell = n$.
We make the following assumption on the problem structure
\begin{assumption}[Problem structure]\label{assumption:problem_structure}
    \hspace{1ex}
    \begin{enumerate}[label=(\Alph*)]
    \item\label{assumption:set} The constraint set $X$ has the block structure 
    $$X = X_1\times \dots\times X_B,$$
    where, for $\ell=1,\dots,B$, the set $X_\ell\subseteq\R^{n_\ell}$ is closed and convex, and $\sum_{\ell=1}^B n_\ell = n$.
    \item\label{assumption:unbiased} Let $g_{i}(x;\xi_i)\in\partial h_i(x;\xi_i)$ (resp. $\gs_i(x)\in\partial f_i(x)$) be a subgradient of $h_i(x;\xi_i)$ (resp. $f_i(x)$) computed at $x$. Then, $g_{i}(x;\xi_i)$ is an unbiased estimator of the subgradient of $f_i$, i.e.,
    \begin{equation*}
        \E[g_{i}(x;\xi_i)]=\gs_i(x).
    \end{equation*}
    \item\label{assumption:subgradients} There exist constants $G_{i}\in[0,\infty)$ and $\bG_{i}\in[0,\infty)$
    such that 
    $$
    \E[\|g_i(x;\xi_i)\|]\leq G_{i},\qquad\E[\|g_i(x;\xi_i)\|^2]\leq \bG_{i},
    $$ 
    for all $x$ and $\xi_i$, for all $i\in\until{N}$.\oprocend
    \end{enumerate}
\end{assumption}
Notice that, if $X=\R^n$, Assumption~\ref{assumption:problem_structure}\ref{assumption:set} is clearly satisfied.
Moreover, let us denote by $g_{i,\ell}(x;\xi_i)$
the $\ell$-th block of $g_{i}(x;\xi_i)$ and let $\gs(x)\in\partial f(x)$ be a subgradient of $f$ computed
at $x$. Then, Assumption~\ref{assumption:problem_structure}\ref{assumption:subgradients} implies that $\E[\|g_{i,\ell}(x;\xi_i)\|]\leq G_i$ for all $\ell\in\until{B}$ and $\|\gs_i(x)\|\leq G_i$.
Moreover, let $\bG\triangleq\sum_{i=1}^N \bG_i$ and $G\triangleq\sum_{i=1}^N G_i$. Then, $\|\gs(x)\|\leq G$ and $\|\gs_i(x)\|\leq G$ for all $i$.

Problem~\eqref{pb:problem} is to be solved in a distributed way by a network
of $N$ agents. Each agent in the network is assumed to know only a portion of
the entire problem, namely agent $i$ knows $f_i$ and the constraint set $X$
only.  We make the following assumption on the network structure.
\begin{assumption}[Communication structure]\label{assumption:communication}
    \hspace{1ex}
    \begin{enumerate}[label=(\Alph*)]
        \item\label{assumption:graph} The network is modeled through a weighted \emph{strongly connected} directed graph $\GG=(\VV,\EE, \WW)$ with $\VV=\until{N}$, $\EE\subseteq\VV\times\VV$ and $\WW\in\R^{N\times N}$ being the weighted adjacency matrix. We denote by $\NNio$ the set of out-neighbors of node $i$, i.e.,
        $\NNio\triangleq\{j\mid (i,j)\in\EE\}\cup\{i\}$.  Similarly, the set of in-neighbors of node
        $i$ is defined as $\NNii\triangleq\{j\mid (j,i)\in\EE\}\cup\{i\}$. 
        \item\label{assumption:stochastic} For all $i,j\in\until{N}$, the weights $w_{ij}$ of the weight matrix $\WW$ satisfy
        \begin{enumerate}[label=(\roman*)]
            % \item $w_{ii}>0$;
            \item if $i\neq j$, $w_{ij}>0$ if and only if $j\in\NNii$;
            \item there exists a constant $\eta>0$ such that $w_{ii}\geq\eta$ and if $w_{ij}>0$, then $w_{ij}\geq\eta$;
            \item $\sum_{j=1}^N w_{ij}=1$ and $\sum_{i=1}^N w_{ij}=1$.\oprocend
        \end{enumerate} 
    \end{enumerate}
\end{assumption}

A function $\omega_{\ell}$ is associated to the
$\ell$-th block of the optimization variable for all $\ell$. Let the function
$\omega_{\ell}:X_\ell\to\R$, be continuously differentiable and
$\sigma_{\ell}$-strongly convex. Functions $\omega_{\ell}$ are sometimes
referred to as distance generating functions. Then, we define the proximal
function, also called \emph{Bregman's divergence}, associated to $\omega_{\ell}$ as
\begin{equation*}
    \nu_{\ell}(a,b)=\omega_{\ell}(b)-\omega_{\ell}(a)-\langle \nabla \omega_{\ell} (a), b-a\rangle,
\end{equation*}
for all $a,b\in X_\ell$. The following assumption is made on the functions $\nu_{\ell}$.
\begin{assumption}[Bregman's divergence separate convexity]\label{assumption:separate}
    For all $\ell\in\until{B}$, the function $\nu_{\ell}$ satisfies
    \begin{equation}\label{eq:separable}
        \nu_{\ell}\left(\sum_{j=1}^N \theta_j a_j,b\right)\leq \sum_{j=1}^N \theta_j \nu_{\ell}(a_j,b), \quad \forall a_1,\dots,a_N,b\in X_\ell,
    \end{equation}
    where $\sum_{j=1}^N \theta_j=1$ and $\theta_j\geq 0$ for all $j$.\oprocend
\end{assumption}
Notice that the above assumption is satisfied by many functions (such as the
quadratic function, the Boltzmann-Shannon entropy and the exponential function)
and conditions on the functions $\omega_{\ell}$
guaranteeing~\eqref{eq:separable} can be provided
(see~\cite{bauschke2001joint}).  
Finally, given $a\in X_\ell$, $b\in\R^{n_\ell}$
and $c\in\R$, the proximal mapping associated to $\nu_{\ell}$ is defined as
\begin{equation}\label{eq:prox}
    \prox_{\ell} (a,b,c)=\arg\min_{u\in X_\ell}\left\lbrace\langle b, u \rangle+\frac{1}{c} \nu_{\ell}(a, u)\right\rbrace.
\end{equation}

\subsection{Preliminary results}
Consider a stochastic, discrete-time dynamical system evolving according to
\begin{equation}\label{eq:system}
    x^{t+1}=A^tx^t,\quad\forall t,
\end{equation}
where $\{A^t\}_{t\geq 0}$ is a sequence of random $n\times n$ row-stochastic matrices.
Let $(\Omega,\mathcal{F},P)$ be a probability space. We assume that the sequence
$\{A^t,\mS^t\}_{t\geq 0}$ forms an \emph{adapted process}, i.e., $\{A^t\}_{t\geq 0}$ is a
stochastic process defined on $(\Omega,\mathcal{F},P)$, $\{\mS^t\}_{t\geq 0}$ is a
filtration (i.e., $\mS^{t}\subseteq\mS^{t+1}$ and $\mS^{t}\subseteq\mathcal{F}$ for
all $t$) and $A^t$ is measurable with respect to $\mS^t$.  Given a sequence of
matrices $\{A^t\}_{t\geq 0}$, let us define the transition matrix from iteration $s$
to iteration $t$ as
\begin{equation*}
    \Phi_{A}^{t,s}\triangleq 
    \begin{cases}
        A^t A^{t-1}\dots A^s,&\text{if }t>s,\\
        A^t,&\text{if }t=s.
    \end{cases}
  \end{equation*}
  Then, the following result, adapted from \cite[Theorem~3.1]{liu2011consensus},
  holds true for system~\eqref{eq:system}.
\begin{lemma}[{\cite[Theorem~3.1]{liu2011consensus}}]\label{lemma:consensus_stochastic}
    Consider system~\eqref{eq:system}. If there exist $h>0$, $\delta>0$ such that $\E[\sum_{t=mh+1}^{(m+1)h}A^t\mid\mS^{mh}]$ contains a $\delta$-spanning tree for each $m$, and $A^t\geq\delta I$ for each $t$, then, for any given initial distribution of $x^0$ with $\E[\|x^0\|^p]<\infty$ (which is independent of $\{A^t\}_{t\geq 0}$), and any $p>0$, it holds
    \begin{align*}
        \E[d(x^t)^p]&=\E[d(\Phi_{A}^{t,0}x^0)^p] \nonumber\\
        &\leq(\mu)^t \E[d(x^0)^p]\leq M(\mu)^t\E[\|x^0\|^p],
    \end{align*}
    where $M\in(0,\infty)$ and $\mu\in(0,1)$.\oprocend
\end{lemma}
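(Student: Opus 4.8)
The plan is to treat $d(\cdot)$ as a disagreement seminorm and to establish a geometric contraction in conditional expectation over windows of length $h$, then unfold this into the claimed decay for every $t$ and every $p>0$. The first and easiest ingredient is pathwise monotonicity: since each $A^t$ is row-stochastic, $A^t\mathbf{1}=\mathbf{1}$ and $A^t$ averages the entries of any vector, so $d(A^tx)\le d(x)$ along every sample path. Hence $d(x^t)$ is non-increasing in $t$ for every realization, which bounds the trajectory and reduces the problem to proving a strict contraction over each block $[mh,(m+1)h]$; the intermediate indices are then controlled for free. The identity $\E[d(x^t)^p]=\E[d(\Phi_{A}^{t,0}x^0)^p]$ is immediate from $x^t=\Phi_{A}^{t,0}x^0$.

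The core is to quantify the window contraction through the coefficient of ergodicity (Dobrushin coefficient). For a row-stochastic matrix $P$ I would use the standard inequality $d(Px)\le\tau(P)\,d(x)$, where $\tau(P)=1-\min_{i,j}\sum_k\min\{[P]_{ik},[P]_{jk}\}\in[0,1]$ measures how far $P$ is from being scrambling. Applying this to the window transition $\Phi_m\triangleq\Phi_{A}^{(m+1)h-1,mh}=A^{(m+1)h-1}\cdots A^{mh}$, for which $x^{(m+1)h}=\Phi_m x^{mh}$, gives pathwise $d(x^{(m+1)h})\le\tau(\Phi_m)\,d(x^{mh})$. Since $x^{mh}$ is a function of $A^0,\dots,A^{mh-1}$ and of the (independent) initial condition, it is $\mS^{mh}$-measurable, so $d(x^{mh})\ge 0$ factors out of the conditional expectation, yielding $\E[d(x^{(m+1)h})\mid\mS^{mh}]\le \E[\tau(\Phi_m)\mid\mS^{mh}]\,d(x^{mh})$.

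The crux, and the step I expect to be hardest, is to show $\E[\tau(\Phi_m)\mid\mS^{mh}]\le\mu_0$ for a deterministic $\mu_0\in[0,1)$. Here I would exploit the two hypotheses jointly. The diagonal bound $A^t\ge\delta I$ propagates through the product: writing $\Phi_m=(A^{(m+1)h-1}\cdots A^{t+1})\,A^t\,(A^{t-1}\cdots A^{mh})$ and using that a product of $k$ matrices each dominating $\delta I$ itself dominates $\delta^kI$ entrywise, one obtains $\Phi_m\ge\delta^{h-1}A^t$ entrywise for every $t$ in the window. This transfers the positive mass of each individual $A^t$ into the product; combined with the assumption that $\E[\sum_{t}A^t\mid\mS^{mh}]$ contains a $\delta$-spanning tree (up to the harmless endpoint shift between $[mh,(m+1)h-1]$ and the hypothesis window), it forces the expected window transition to be scrambling, so that every pair of rows acquires a common column of positive overlap and the minimum overlap in the definition of $\tau$ is bounded below by a constant depending only on $\delta$, $h$ and $N$. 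The delicate points are that the spanning-tree (rather than fully positive) condition may require composing a bounded number of consecutive windows before scrambling is certain, and that the overlap lower bound must survive the expectation and conditioning; both are handled exactly as in \cite[Theorem~3.1]{liu2011consensus}, which is why the result is quoted in adapted form.

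Finally I would assemble the decay. Iterating the one-window bound via the tower property gives $\E[d(x^{mh})]\le\mu_0^{\,m}\,\E[d(x^0)]$, and the same argument applied to $r\mapsto r^p$ (using $d(Px)^p\le\tau(P)^p d(x)^p$ pathwise) upgrades this to $\E[d(x^{mh})^p]\le\mu_0^{\,m}\E[d(x^0)^p]$. For general $t$ with $mh\le t<(m+1)h$, pathwise monotonicity gives $d(x^t)^p\le d(x^{mh})^p$; setting $\mu=\mu_0^{1/h}\in(0,1)$ one has $\mu_0^{\,m}\le\mu_0^{-1}\mu^{t}$, so choosing $\mu$ marginally larger (still below $1$) absorbs the constant $\mu_0^{-1}$ for large $t$ and yields $\E[d(x^t)^p]\le(\mu)^t\E[d(x^0)^p]$. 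The last inequality then follows from $d(x^0)\le 2\|x^0\|$, whence $\E[d(x^0)^p]\le 2^p\E[\|x^0\|^p]$ and $M=2^p$ (times any finite constant absorbed above), completing the bound.
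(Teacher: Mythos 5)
First, note what you are being compared against: the paper does not prove this lemma at all --- it is imported, in adapted form, from \cite[Theorem~3.1]{liu2011consensus} --- so the only question is whether your argument would stand as a self-contained proof. Your skeleton is sensible (pathwise monotonicity of $d(\cdot)$ under row-stochastic maps, windowed contraction via the coefficient of ergodicity, the power-$p$ upgrade and the final $d(x^0)\le 2\|x^0\|$ bookkeeping are all fine), but the crux step --- $\E[\tau(\Phi_m)\mid\mS^{mh}]\le\mu_0<1$ --- is precisely where the proof fails, and your justification for it is invalid. You argue: $\Phi_m\ge\delta^{h-1}A^t$ entrywise, hence $\E[\Phi_m\mid\mS^{mh}]$ inherits a scaled copy of the $\delta$-spanning tree of $\E[\sum_t A^t\mid\mS^{mh}]$, hence the \emph{expected} window transition is (after composing boundedly many windows) scrambling, hence the minimum row overlap is bounded below. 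The fatal problem is the direction of Jensen's inequality: the overlap functional $P\mapsto\min_{i,j}\sum_k\min\{[P]_{ik},[P]_{jk}\}$ is concave (a minimum of sums of minima of linear functions), so $\tau(P)=1-\mathrm{overlap}(P)$ is convex, and therefore $\E[\tau(\Phi_m)\mid\mS^{mh}]\ \ge\ \tau\bigl(\E[\Phi_m\mid\mS^{mh}]\bigr)$. Scrambling of the conditional-expectation matrix thus yields a \emph{lower} bound on the quantity you must bound \emph{above}; it gives no upper bound whatsoever. The hypothesis constrains only expected matrices, while $\tau$ of the random product can equal $1$ on realizations in which the tree edges simply fail to appear: if the tree is a directed path, a window in which the edges occur in the wrong order (or not at all) leaves entire pairs of rows of $\Phi_m$ with zero overlap, no matter what the expected sum looks like. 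For the same reason, your phrase that composing windows makes scrambling ``certain'' is wrong --- it is never certain, only of conditional probability bounded below.

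What the theorem actually requires is a pathwise, probabilistic argument, and this is the entire mathematical content of \cite[Theorem~3.1]{liu2011consensus}: from $\E\bigl[\sum_t [A^t]_{ij}\mid\mS^{mh}\bigr]\ge\delta$ together with $[A^t]_{ij}\le 1$ one extracts, via a reverse-Markov bound, that each tree edge is present within the window with conditional probability bounded below; composing sufficiently many consecutive windows, the event that the edges appear in an order producing a quantified scrambling product has conditional probability at least some $p_0>0$ (here the hypothesis $A^t\ge\delta I$ is what preserves already-accumulated positive entries along the product); on that event $\tau\le 1-\gamma$ for a deterministic $\gamma>0$, on its complement one uses only $\tau\le 1$, and combining gives $\E[\tau\mid\cdot]\le 1-p_0\gamma<1$. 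None of this appears in your proposal; instead, at exactly this point you write that the delicate steps are ``handled exactly as in \cite[Theorem~3.1]{liu2011consensus}'' --- but that theorem \emph{is} the statement being proved, so at the only substantive step your argument is circular. As written, the proposal is a correct reduction of the lemma to its own citation (together with correct but routine bookkeeping around it), not a proof.
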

Finally, the following three results will be useful in the rest of the paper.
\begin{lemma}\label{lemma:series}
    Given a scalar $\beta\neq 1$, it holds that
    \begin{enumerate}[label=(\roman*)]
        \item\label{item:1} for any $t\geq r\geq 0$, $\sum_{s=r}^t(\beta)^s = \frac{(\beta)^r-(\beta)^{t+1}}{1-\beta}$
        \item\label{item:2} for any $t\geq 0$, $\sum_{s=0}^t\sum_{\tau=0}^{t-s}(\beta)^\tau = \frac{t+1-\beta(t+2)+(\beta)^{t+2}}{(1-\beta)^2}$\oprocend
    \end{enumerate}
\end{lemma}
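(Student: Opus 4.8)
The plan is to establish both identities by direct computation, using the elementary closed form of a finite geometric progression as the only tool; part~\ref{item:1} is proved on its own and then reused to dispatch part~\ref{item:2}.

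For part~\ref{item:1}, I would factor $(\beta)^r$ out of the sum so as to reduce to a geometric series starting at exponent zero. Setting $k=s-r$, the sum becomes $(\beta)^r\sum_{k=0}^{t-r}(\beta)^k$, and since $\beta\neq 1$ the standard identity $\sum_{k=0}^{m}(\beta)^k=\frac{1-(\beta)^{m+1}}{1-\beta}$ (itself verified by multiplying both sides by $1-\beta$ and telescoping, or by an immediate induction on $m$) gives $(\beta)^r\cdot\frac{1-(\beta)^{t-r+1}}{1-\beta}=\frac{(\beta)^r-(\beta)^{t+1}}{1-\beta}$, as claimed.

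For part~\ref{item:2}, the idea is to collapse the inner sum first and then handle the outer one. Applying part~\ref{item:1} with lower index $0$ and upper index $t-s$, the inner sum equals $\frac{1-(\beta)^{t-s+1}}{1-\beta}$, so that
\begin{equation*}
\sum_{s=0}^t\sum_{\tau=0}^{t-s}(\beta)^\tau=\frac{1}{1-\beta}\left[\sum_{s=0}^t 1-\beta\sum_{s=0}^t(\beta)^{t-s}\right].
\end{equation*}
The first bracketed term is simply $t+1$; for the second I would reindex by $k=t-s$, turning $\sum_{s=0}^t(\beta)^{t-s}$ into $\sum_{k=0}^t(\beta)^k=\frac{1-(\beta)^{t+1}}{1-\beta}$ by part~\ref{item:1} again. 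Substituting and placing everything over the common denominator $(1-\beta)^2$ yields numerator $(t+1)(1-\beta)-\beta\big(1-(\beta)^{t+1}\big)$, which expands to $t+1-\beta(t+2)+(\beta)^{t+2}$ after collecting the linear-in-$t$ and constant contributions.

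The computation carries no conceptual difficulty: both claims follow from the geometric-series formula, invoked once in part~\ref{item:1} and twice in part~\ref{item:2}. The only place demanding care is the bookkeeping in part~\ref{item:2} — keeping track of the shift of exponents in the reindexing $k=t-s$ and correctly expanding $(t+1)(1-\beta)-\beta(1-(\beta)^{t+1})$ so that the coefficient of $\beta$ collapses to $-(t+2)$. As a safeguard I would cross-check the final formula at small values such as $t=0$ (both sides equal $1$) and $t=1$ (both sides equal $2+\beta$, noting that the numerator $2-3\beta+(\beta)^3$ factors as $(1-\beta)^2(\beta+2)$).
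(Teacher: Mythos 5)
Your proof is correct: both identities follow exactly as you compute them, and your numerical cross-checks at $t=0$ and $t=1$ confirm the algebra. The paper itself states this lemma without proof (it is marked as an elementary preliminary fact), so there is no argument to compare against; your derivation via the standard geometric-series formula, applied once for part~(i) and twice for part~(ii), is precisely the routine computation the authors left implicit.
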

\begin{lemma}[{\cite[Lemma~3.1]{ram2010distributed}}]\label{lemma:series_nedic}
    Let $\{\gamma^t\}_{t\geq 0}$ be a scalar sequence. 
    \begin{enumerate}[label=(\roman*)]
        \item If $\lim_{t\to\infty}\gamma^t=\gamma$ and $\beta\in(0,1)$ then $\lim_{t\to\infty}\sum_{s=0}^t(\beta)^{t-s}\gamma^s=\frac{\gamma}{1-\beta}$.
        \item If $\gamma^t\geq 0$ $\forall t$, $\sum_{t=0}^\infty \gamma^t<\infty$ and $\beta\in(0,1)$, then $\sum_{t=0}^\infty\left(\sum_{s=0}^t(\beta)^{t-s}\gamma^s \right)<\infty$.\oprocend
    \end{enumerate}%\oprocend
\end{lemma}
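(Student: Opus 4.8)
The plan is to handle the two parts separately, since they are distinct classical facts about geometric convolutions. For part~\ref{item:1} I would first isolate the contribution of the limiting value $\gamma$. By Lemma~\ref{lemma:series}\ref{item:1}, after reindexing $k=t-s$, the weights satisfy $\sum_{s=0}^t(\beta)^{t-s}=\sum_{k=0}^t(\beta)^k=\frac{1-(\beta)^{t+1}}{1-\beta}\to\frac{1}{1-\beta}$, so that $\gamma\sum_{s=0}^t(\beta)^{t-s}\to\frac{\gamma}{1-\beta}$. It then remains to show that the centered sum $\sum_{s=0}^t(\beta)^{t-s}(\gamma^s-\gamma)$ vanishes, which is a Toeplitz-type (Cesàro-like) argument. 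Fixing $\epsilon>0$, I would choose $S$ with $|\gamma^s-\gamma|<\epsilon$ for all $s\geq S$ and split the sum at index $S$. The tail $\sum_{s=S}^t(\beta)^{t-s}|\gamma^s-\gamma|$ is bounded by $\epsilon\sum_{k=0}^\infty(\beta)^k=\frac{\epsilon}{1-\beta}$ uniformly in $t$, while the head $\sum_{s=0}^{S-1}(\beta)^{t-s}|\gamma^s-\gamma|$ is a \emph{finite} sum of fixed terms, each weighted by $(\beta)^{t-s}$ with $t-s\geq t-S+1$; since $(\beta)^{t-s}\to 0$ as $t\to\infty$ for every fixed $s$, the head tends to $0$. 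Taking $\limsup_t$ and then $\epsilon\downarrow 0$ yields $\lim_{t\to\infty}\sum_{s=0}^t(\beta)^{t-s}\gamma^s=\frac{\gamma}{1-\beta}$.

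For part~\ref{item:2}, the key step is an interchange of the order of summation, which is legitimate because every term is nonnegative (Tonelli's theorem for series). Concretely, I would write
\begin{equation*}
\sum_{t=0}^\infty\sum_{s=0}^t(\beta)^{t-s}\gamma^s=\sum_{s=0}^\infty\gamma^s\sum_{t=s}^\infty(\beta)^{t-s}=\frac{1}{1-\beta}\sum_{s=0}^\infty\gamma^s,
\end{equation*}
where the inner geometric sum equals $\frac{1}{1-\beta}$ (the limiting form of Lemma~\ref{lemma:series}\ref{item:1} with $r=0$). Since $\sum_{s=0}^\infty\gamma^s<\infty$ by hypothesis and $\beta\in(0,1)$, the right-hand side is finite, which is exactly the claimed summability.

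The only genuinely delicate point is the head/tail bookkeeping in part~\ref{item:1}: the splitting index $S$ must be chosen \emph{before} sending $t\to\infty$, so that the finitely many head terms can be driven to zero by the geometric factor $(\beta)^{t-s}$ while the tail is simultaneously controlled uniformly by $\epsilon$. Part~\ref{item:2}, by contrast, is immediate once nonnegativity justifies swapping the summation order, so I expect the bulk of the effort to lie in the estimate for the first part.
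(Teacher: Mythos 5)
Your proof is correct, but there is no proof in the paper to compare it against: the lemma is imported verbatim, by citation, from Lemma~3.1 of \cite{ram2010distributed}, and the paper closes the statement with its end-of-statement marker rather than a proof environment. Your argument is therefore a self-contained reconstruction of the cited result, and both halves are sound. In part (i), the decomposition $\sum_{s=0}^t(\beta)^{t-s}\gamma^s=\gamma\sum_{s=0}^t(\beta)^{t-s}+\sum_{s=0}^t(\beta)^{t-s}(\gamma^s-\gamma)$ followed by the head/tail split is the standard Toeplitz-type argument: the tail is uniformly bounded by $\epsilon/(1-\beta)$, while the head is a finite sum of fixed constants $|\gamma^s-\gamma|$ (finite, since a convergent sequence is bounded) each multiplied by $(\beta)^{t-s}\to 0$, so it vanishes for each fixed splitting index $S$. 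Your emphasis on fixing $S$ before sending $t\to\infty$ is precisely where such arguments usually go wrong, and you handle it correctly. In part (ii), nonnegativity of all terms licenses the Tonelli interchange, giving $\sum_{t=0}^\infty\sum_{s=0}^t(\beta)^{t-s}\gamma^s=\frac{1}{1-\beta}\sum_{s=0}^\infty\gamma^s<\infty$. Nothing is missing; your write-up in fact supplies the detail that the paper delegates entirely to the citation.
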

\begin{lemma}[Tower property of conditional expectation]\label{lemma:expectation}
  Let $X$ be a random variable defined on a probability space
  $(\Omega,\mathcal{F},P)$. Let $\mathcal{Z}\subseteq \mathcal{Y}\subseteq\mathcal{F}$. Then, $
    \E[\E[X\mid \mathcal{Y},\mathcal{Z}]\mid \mathcal{Z}]=\E[X\mid \mathcal{Z}].$\oprocend
\end{lemma}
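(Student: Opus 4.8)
The plan is to reduce the statement to the classical law of iterated expectations by exploiting the nesting of the $\sigma$-algebras, and then to verify the defining integral property of conditional expectation directly. First I would observe that conditioning simultaneously on $\mathcal{Y}$ and $\mathcal{Z}$ means conditioning on the $\sigma$-algebra $\sigma(\mathcal{Y}\cup\mathcal{Z})$ generated by their union; since by hypothesis $\mathcal{Z}\subseteq\mathcal{Y}$, this join collapses to $\mathcal{Y}$ itself, so that $\E[X\mid\mathcal{Y},\mathcal{Z}]=\E[X\mid\mathcal{Y}]$. The claim therefore reduces to showing $\E[\E[X\mid\mathcal{Y}]\mid\mathcal{Z}]=\E[X\mid\mathcal{Z}]$ under the assumption $\mathcal{Z}\subseteq\mathcal{Y}$.

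Next I would set $W\triangleq\E[\E[X\mid\mathcal{Y}]\mid\mathcal{Z}]$ and recall that $\E[X\mid\mathcal{Z}]$ is characterized, up to $P$-a.s. equality, as the unique $\mathcal{Z}$-measurable random variable $V$ satisfying $\int_A V\,\d P=\int_A X\,\d P$ for every $A\in\mathcal{Z}$. Since $W$ is $\mathcal{Z}$-measurable by construction (being itself a conditional expectation given $\mathcal{Z}$), it suffices to verify that $W$ fulfills this integral identity.

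The core step then chains the two defining properties. Fix $A\in\mathcal{Z}$. By the definition of the outer conditional expectation given $\mathcal{Z}$, one has $\int_A W\,\d P=\int_A \E[X\mid\mathcal{Y}]\,\d P$. Because $\mathcal{Z}\subseteq\mathcal{Y}$, the set $A$ also belongs to $\mathcal{Y}$, and hence the defining property of $\E[X\mid\mathcal{Y}]$ yields $\int_A \E[X\mid\mathcal{Y}]\,\d P=\int_A X\,\d P$. Combining these gives $\int_A W\,\d P=\int_A X\,\d P$ for every $A\in\mathcal{Z}$, and the a.s. uniqueness of conditional expectation lets me conclude $W=\E[X\mid\mathcal{Z}]$, as desired.

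I do not anticipate any genuine obstacle: the result is the standard tower property, and everything follows from the characterization of conditional expectation as the integral-matching $\mathcal{Z}$-measurable object. The only point requiring a moment's care is the interpretation of the double conditioning $\E[\cdot\mid\mathcal{Y},\mathcal{Z}]$, whose reduction to $\E[\cdot\mid\mathcal{Y}]$ via $\sigma(\mathcal{Y}\cup\mathcal{Z})=\mathcal{Y}$ is precisely what makes the nesting hypothesis $\mathcal{Z}\subseteq\mathcal{Y}$ do its work; integrability of $X$, implicit in the existence of its conditional expectations, ensures that all the objects involved are well defined.
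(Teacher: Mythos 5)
Your proof is correct. Note, however, that the paper itself gives no proof of this lemma: it is listed among the preliminary results (closed with the open-square symbol, like Lemmas~\ref{lemma:consensus_stochastic}--\ref{lemma:series_nedic}) as a standard fact of probability theory, so there is no paper argument to compare against. What you wrote is the classical textbook derivation — reduce $\E[X\mid \mathcal{Y},\mathcal{Z}]$ to $\E[X\mid \mathcal{Y}]$ via $\sigma(\mathcal{Y}\cup\mathcal{Z})=\mathcal{Y}$ when $\mathcal{Z}\subseteq\mathcal{Y}$, then verify the defining integral identity $\int_A \E[\E[X\mid\mathcal{Y}]\mid\mathcal{Z}]\,\d P=\int_A X\,\d P$ for $A\in\mathcal{Z}\subseteq\mathcal{Y}$ and invoke a.s.\ uniqueness — and it correctly flags the two implicit hypotheses (integrability of $X$, and that $\mathcal{Y},\mathcal{Z}$ are sub-$\sigma$-algebras), which is exactly how the lemma is used in the proof of Theorem~\ref{theorem:bound} with $\mathcal{Z}=\mS^0$ and $\mathcal{Y}=\mS^\tau$.
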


\section{Distributed Block Proximal Method}\label{sec:algorithm}
The \DBP for solving problem~\eqref{pb:problem} in a distributed way is now
introduced. %
The algorithm works as follows. Each
agent $i$ maintains a local solution estimate $x_i^t$ and a local copy of the
estimates of its in-neighbors. Let us denote by $\lxi$ the copy of the solution
estimate of agent $j$ at agent
$i$. 
At the beginning, each node initializes its state with a random
(bounded) initial condition $x_i^0$ which is then shared with its
neighbors.
At each iteration each agent $i$ is awake with probability $p_{i,on}\in (0,1]$
and idle with probability $1-p_{i,on}$.
Thus, the proposed algorithm models a particular type of asynchrony
in which the communication graph is fixed and agents can communicate or not with
their neighbors with a certain probability. 
If agent $i$ is awake, it picks randomly a block $\ell_i^t\in\until{B}$, some
$\xi_i^t$, and performs two updates:
\begin{enumerate}[label=(\roman*)]
    \item it computes a weighted average of its in-neighbors' estimates $\lxi$, $j\in\tNNii$;
    \item it computes
      $x_i^{t+1}$ by updating the $\ell_i^t$-th block of $x_i^{t}$ through a proximal mapping step and leaving the other blocks unchanged.
\end{enumerate}
Then, it broadcasts $x_{i,\ell_i^t}^{t+1}$ to its out-neighbors. We model the
status (awake or idle) of each node $i$ at each iteration $t$ through a random
variable $s_i^t\in\{0,1\}$ which is $1$ (corresponding to being awake) with
probability $p_{i,on}$ and $0$ with probability $1-p_{i,on}$.  A pseudocode of
the method is reported in Algorithm~\ref{alg:DBS}.

\begin{algorithm}
    \begin{algorithmic}
        \init $x_i^0$
		\evol for $t=0,1,\dots$
         
        \State \textsc{Update} for all $j\in\NNii$
        \begin{equation}\label{eq:xl_update}
            x_{j,\ell}^t{\mid}_i = 
            \begin{cases}
                x_{j,\ell}^t, &\text{if }\ell=\ell_j^{t-1} \; \text{and} \; s_j^{t-1}=1\\
                x_{j,\ell}^{t-1}{\mid}_i, &\text{otherwise}
            \end{cases}
        \end{equation}
        \If{$s_i^t=1$ }
        \State\textsc{Pick} $\ell_i^{t}\in\until{B}$ with $P(\ell_i^t=\ell)=p_{i,\ell}>0$, $\forall\ell$
        \State\textsc{Compute} 
		\begin{equation}\label{eq:y_update_l}
			y_i^{t} = \sum_{j\in\tNNii} w_{ij} \lxi
		\end{equation}
        \State\textsc{Update} 
        \begin{equation}\label{eq:x_update_l}
            x_{i,\ell}^{t+1} = 
            \begin{cases}
                \prox_{\ell}(y_{i,\ell}^t,g_{i,\ell}(y_i^t;\xi_i^t),\alpha_i^t),&\text{if } \ell=\ell_i^t\\
                x_{i,\ell}^{t},&\text{otherwise}
            \end{cases}
        \end{equation}
        \State\textsc{Broadcast} $x_{i,\ell_i^t}^{t+1}$ to $j\in\NNio$
        \Else{ $x_i^{t+1}=x_i^t$}
        \EndIf
		
	\end{algorithmic}
	\caption{\DBP}\label{alg:DBS}
\end{algorithm} 

Notice that all the quantities involved in the above algorithm are local for
each node. In fact, each node has locally defined probabilities (both of
awakening and block drawing) and local stepsizes.

Moreover, it is worth noting that, despite node $i$ receives from each
$j\in\NN_i^{in}$ only the block $x_{j,\ell_j^{t-1}}^{t}$, the consensus
step~\eqref{eq:y_update_l} is in fact performed by using the entire
$x_j^{t}$. Indeed, the other blocks have not changed since the last time they
have been received. This is formalized in the next result.
\begin{lemma}\label{lemma:equivalent}
    Let Assumption~\ref{assumption:communication} hold. Then $x_{j}^t{\mid}_i=x_j^t$ for all $t$. Moreover, Algorithm~\ref{alg:DBS} can be compactly rewritten as follows. For all $i\in\until{N}$ and all $t$, if $s_i^t=1$,
    \begin{align}
        y_i^{t} &= \sum_{j=1}^N w_{ij} x_j^t,\label{eq:y_update}\\
        x_{i,\ell}^{t+1} &= 
        \begin{cases}
            \textup{prox}_{\ell}(y_{i,\ell}^t,g_{i,\ell}(y_i^t;\xi_i^t),\alpha_i^t),&\text{if } \ell=\ell_i^t,\\
            x_{i,\ell}^{t},&\text{otherwise},\label{eq:x_update}
        \end{cases}
    \end{align}
    else, $x_i^{t+1}=x_i^t$.
\end{lemma}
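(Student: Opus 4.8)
The plan is to prove the identity $x_j^t{\mid}_i = x_j^t$ by induction on $t$, and then to obtain the compact form by substituting this identity into the consensus step and exploiting the support structure of the weight matrix $W$.

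For the base case $t=0$, the initialization shares the full vector $x_j^0$ with every neighbor, so $x_j^0{\mid}_i = x_j^0$ holds by construction. For the inductive step I would fix $i$, $j$, and an arbitrary block index $\ell$, assume $x_j^{t-1}{\mid}_i = x_j^{t-1}$, and distinguish the two cases of the copy-update rule~\eqref{eq:xl_update}. In the first case ($\ell=\ell_j^{t-1}$ and $s_j^{t-1}=1$), the rule sets $x_{j,\ell}^t{\mid}_i = x_{j,\ell}^t$ directly, reflecting that agent $j$ broadcast exactly this block at iteration $t-1$. In the second case I would observe that the condition under which the copy is \emph{not} refreshed, namely $\ell\neq\ell_j^{t-1}$ or $s_j^{t-1}=0$, is precisely the condition under which the true block does \emph{not} change in the state dynamics, since block $\ell$ of $x_j$ is updated only when $j$ is awake and draws $\ell$. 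Hence $x_{j,\ell}^t = x_{j,\ell}^{t-1}$, and combining the copy rule with the inductive hypothesis gives $x_{j,\ell}^t{\mid}_i = x_{j,\ell}^{t-1}{\mid}_i = x_{j,\ell}^{t-1} = x_{j,\ell}^t$. As $\ell$ was arbitrary, the identity $x_j^t{\mid}_i = x_j^t$ follows.

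The crucial observation, and essentially the only place requiring care, is this exact complementarity between ``the copy is not refreshed'' and ``the true block does not change'': both are governed by the single event $\{s_j^{t-1}=1,\ \ell_j^{t-1}=\ell\}$. Once this correspondence is matched, the induction closes with no further effort, so I do not expect a genuine obstacle here, only careful bookkeeping of the time indices. The one subtlety is the initial iteration, where the superscripts $t-1$ in~\eqref{eq:xl_update} are undefined; this is precisely why the base case must be supplied separately by the initialization rather than by the update rule.

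Finally, having established $x_j^t{\mid}_i = x_j^t$, I would substitute it into the consensus step~\eqref{eq:y_update_l} to obtain $y_i^t = \sum_{j\in\NNii} w_{ij}\, x_j^t$. By Assumption~\ref{assumption:communication}\ref{assumption:stochastic}, $w_{ij}=0$ for every $j\notin\NNii$, so the summation over $\NNii$ may be extended to a summation over all $j\in\until{N}$ without changing its value, which yields~\eqref{eq:y_update}. The proximal update~\eqref{eq:x_update_l} and the idle branch then carry over verbatim, since they already act on $x_i^t$ and on the now-correctly-computed $y_i^t$, giving~\eqref{eq:x_update} and completing the proof.
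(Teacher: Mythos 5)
Your proof is correct and follows essentially the same argument as the paper: the copy of block $\ell$ at node $i$ is refreshed exactly when node $j$ actually modifies that block, and otherwise both the copy and the true block stay unchanged, so (with the fixed graph and the initialization) the identity $x_j^t{\mid}_i = x_j^t$ holds, after which extending the sum over $\NNii$ to all $j$ via $w_{ij}=0$ for $j\notin\NNii$ gives the compact form. The only difference is that the paper states this observation informally, whereas you formalize it as an induction on $t$ with an explicit base case.
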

\begin{proof}
  The fact that $x_{j}^t{\mid}_i=x_j^t$ for all $i$ and all $t$ follows
  immediately from the evolution of the algorithm. In fact, the received block
  $x_{j,\ell_j^{t-1}}^t{\mid}_i$ is the only block that node $j$ has modified in
  the last iteration, while the others have remained unchanged. Hence, since the
  graph $\GG$ is fixed, it is clear that $x_{j}^t{\mid}_i=x_j^t$ for all $i$ and
  all $t$. The reformulation of Algorithm~\ref{alg:DBS}
  as~\eqref{eq:y_update}-\eqref{eq:x_update} is then immediate from Assumption~\ref{assumption:communication}\ref{assumption:stochastic}.
\end{proof}
In virtue of the previous result, in order to lighten the notation in the subsequent analysis, we will use~\eqref{eq:y_update}-\eqref{eq:x_update} in place of Algorithm~\ref{alg:DBS}, by making the block communication implicit. 

As for the block-wise proximal update~\eqref{eq:x_update}, the $\ell_i^t$-th
block of a \emph{whole} stochastic subgradient computed at $y_i^{t}$ is used.
Unfortunately, computing a subgradient with respect to the $\ell_i^t$-th
component only is, in general, \emph{not} equivalent to picking the
$\ell_i^t$-th block of a \emph{whole} subgradient $g_i(y_i^t;\xi_i^t)$. In fact, in general it holds that, picking $g_1\in\partial_{y_{i,1}} h_i(y_i^t;\xi_i^t)$,\dots, $g_B\in\partial_{y_{i,B}} h_i(y_i^t;\xi_i^t)$ does not imply $[g_i^\top,\dots,g_B^\top]^\top\in \partial h_i(y_i^t;\xi_i^t)$.
This will turn out to be extremely important in the subsequent analysis.
If functions $f_i$ are separable on the blocks, then, only the subgradient with
respect to the $\ell_i^t$-th component can be computed.
Similarly, if the
functions $f_i$ are smooth, the $\ell_i^t$-th block of the gradient can be
directly computed as the gradient with respect to that block.
In these cases, the computational load at each node can be further reduced, as it will be shown in Section~\ref{sec:special}.

The last key feature of the \DBP involves the consensus
step~\eqref{eq:y_update}. Let $\zl^t$ be the vector stacking the $\ell$-th
component of all the $x_i^t$, i.e.,
$\zl^t\triangleq[(x_{1,\ell}^t)^\top, \dots, (x_{N,\ell}^t)^\top]^\top$. Also,
let $\UU_{\ell}^t$ be a diagonal matrix in which the $i$-th element of the
diagonal is set to $1$ if $s_i^t=1$ and $\ell_i^t=\ell$, and it is set to $0$
otherwise, i.e.,
\begin{equation*}
    [\UU_{\ell}^t]_{ij} = \begin{cases}
        1,&\text{if }i=j\text{, }\ell=\ell_i^t \text{ and }s_i^t=1,\\
        0,&\text{otherwise}.
    \end{cases}
\end{equation*}
Finally, let $\UU_{-\ell}^t=I-\UU_{\ell}^t$.
Now, consider a consensus protocol associated to the \DBPnospace, i.e.,
\begin{align*}
    y_i^{t} &= \sum_{j=1}^N w_{ij} x_j^t,\\
    x_{i,\ell}^{t+1} &= 
    \begin{cases}
        y_{i,\ell}^t,&\text{if } \ell=\ell_i^t \text{ and } s_i^t=1,\\
        x_{i,\ell}^{t},&\text{otherwise}.
    \end{cases}
\end{align*}
This system can be rewritten in terms
of $\zl$ as
\begin{equation*}%\label{eq:zl_y}
    \zl^{t+1}=\tWl^{t}\zl^{t},
\end{equation*}
where $\tWl^{t}\triangleq\UU_{-\ell}^{t} + \UU_{\ell}^{t} \WW$.
It can be easily verified that, for all $\ell$ and $t$, the matrix $\tWl^t$ is
row-stochastic but not doubly stochastic anymore (unless all nodes select the
same block $\ell$ at some iteration $t$).

\begin{remark}
It is worth noting that the proposed algorithm, besides being easy to implement, considers challenges that cannot be addressed by other block-wise distributed algorithms~\cite{notarnicola2018distributed,FARINA2019243,necoara2013random}. In particular, none of those works deals with stochastic problems. Moreover, in~\cite{notarnicola2018distributed} composite objective functions with non-smooth components are considered but the non-smooth part must be common to all the agents. In~\cite{FARINA2019243,necoara2013random} at least differentiability of the objective is required. Finally, in our algorithm, all the algorithm parameters are local.%
\end{remark}

\section{Special instances} 
\label{sec:special}

In this section, three special cases of the \DBP are presented. The first one is
obtained by choosing the squared 2-norm as distance generating function, while the other two result from smooth and separable objective functions respectively.

\subsection{Distributed Block Subgradient Method}
By using $\omega_{\ell}(x)=\frac{1}{2}\|x\|^2$ for all $\ell$, and
assuming $X=\R^n$, the proximal mapping~\eqref{eq:prox} has an explicit
analytical solution and the update step~\eqref{eq:x_update_l} becomes
\begin{equation}\label{eq:x_update_l_subgradient}
    x_{i,\ell}^{t+1} = 
    \begin{cases}
        y_{i,\ell}^t - \alpha_i^t g_{i,\ell}(y_i^t;\xi_i^t), &\text{if } \ell=\ell_i^t\\
        x_{i,\ell}^{t},&\text{otherwise}.
    \end{cases}
\end{equation}
Notice that, the proximal step becomes a subgradient step on a single block of
the optimization variable. Thus, we call \DBS the resulting algorithm, i.e., the
one obtained by replacing~\eqref{eq:x_update_l}
with~\eqref{eq:x_update_l_subgradient} in Algorithm~\ref{alg:DBS}. Notice
  that, in this case, it holds that, for all $\ell$, the strong
  convexity parameter is $\sigma_{\ell}=1$, thus resulting in special bounds
  in the subsequent algorithm analysis.

\subsection{Smooth functions}
    The update of the solution estimate in~\eqref{eq:x_update_l} requires, in general, for node $i$ at iteration $t$, the computation of an entire stochastic subgradient at the point $y_i^t$. However, only the $\ell_i^t$-th block of the computed subgradient is used in the update step. When a function $h_i$ is smooth, however, the $\ell_i^t$-th block of its gradient can be directly computed as the gradient of $h_i$ with respect to the $\ell_i^t$-th block of the optimization variable and~\eqref{eq:x_update_l} can be replaced by
    \begin{equation}\label{eq:x_update_l_smooth}
        x_{i,\ell}^{t+1} = 
        \begin{cases}
            \prox_{\ell}(y_{i,\ell}^t,\nabla_{\ell}h_{i}(y_i^t;\xi_i^t),\alpha_i^t), &\text{if } \ell=\ell_i^t,\\
            x_{i,\ell}^{t},&\text{otherwise},
        \end{cases}
    \end{equation}
    where $\nabla_{\ell}h_{i}$ denotes the (partial) gradient of $h_i$ with respect to the $\ell$-th block of the optimization variable. Thus, when smooth functions are involved in the problem, the computational load can be reduced by avoiding the computation of the entire (sub)gradient. 

\subsection{Separable functions}
    When functions $h_i(x;\xi_i)$ are separable, i.e., 
    $$
        h_i(x;\xi_i) = \sum_{\ell=1}^B \hat{h}_{i,\ell}(x_\ell,\xi_i),
    $$
    the \DBP can be further simplified, allowing for an extra reduction of the computational load at each iteration at a given node. In fact,
    it holds that
    $\partial h_i(y_i^t;\xi_i^t)= \partial_{y_{i,1}}
    h_i(y_i^t;\xi_i^t)\times\dots \partial_{y_{i,B}} h_i(y_i^t;\xi_i^t)$, and
    hence
    $ g_i(y_i^t;\xi_i^t)=\sum_{\ell=1}^B
    \hat{g}_{i,\ell}(y_{i,\ell}^{t},\xi_i^t), $ where
    $\hat{g}_{i,\ell}\in \partial \hat{h}_{i,\ell}$ is a subgradient of
    $\hat{h}_{i,\ell}$. This implies that
    $g_{i,\ell}(y_{i}^{t},\xi_i^t)=\hat{g}_{i,\ell}(y_{i,\ell}^{t},\xi_i^t)$
    and, thus, only the $\ell_i^t$-th block of $y_i^t$ is needed in order to
    compute $g_{i,\ell}(y_{i}^{t},\xi_i^t)$ and hence $x_i^{t+1}$. Thus the \DBP
    can be simplified by allowing nodes with a separable function to reduce
    their computational load. In particular, assume the cost function of
      node $i$ to be separable. Then, a single block of $y_i^t$ can be updated
      at each iteration and a subgradient can be directly computed for the
      corresponding block, without computing an entire subgradient. Hence, the
      algorithm can be rewritten, by using the equivalent formulation in
      Lemma~\ref{lemma:equivalent}, as follows. If $s_i^t=1$,
    \begin{align} 
        y_{i,\ell}^{t} &=   
        \begin{cases}
        \sum_{j=1}^N w_{ij} x_{j,\ell}^t,&\text{if } \ell=\ell_i^t,\\
        y_{i,\ell}^{t-1},&\text{else},\label{eq:y_update_sep}
        \end{cases}\\
        x_{i,\ell}^{t+1} &= 
        \begin{cases}
            \textup{prox}_{\ell}(y_{i,\ell}^t,\hat{g}_{i,\ell}(y_{i,\ell}^{t},\xi_i^t),\alpha_i^t),&\text{if } \ell=\ell_i^t,\\
            x_{i,\ell}^{t},&\text{otherwise},\label{eq:x_update_sep}
        \end{cases}
    \end{align}
    else, $x_i^{t+1}=x_i^t$.

\section{Algorithm analysis}\label{sec:analysis}
In this section, the convergence of the \DBP is proven in expected value. The
proof consists of two main parts. In the first one the consensus of the agents'
solution estimates is shown, while in the second one convergence towards the
optimal cost is proven. Both results are given, at first, in a general form and,
then, specialized to the case of constant stepsizes (in which convergence to a
neighborhood is proven) and diminishing stepsizes (in which exact asymptotic
convergence is reached).

Define $a^t\triangleq[\alpha_1^t,\dots,\alpha_N^t]^\top$, $a_M^t\triangleq\max_{i}\alpha_i^t$ and $a_m^t\triangleq\min_{i}\alpha_i^t$.
We summarize in the following two assumptions, the two different choices for the stepsize sequences we consider in the following analysis.
\begin{assumption}[Constant stepsize]\label{assumption:constant}
    The sequences $\{\alpha_i^t\}_{t\geq 0}$ satisfy $\alpha_i^t=\alpha_i>0$ for all $t$ and all $i$. %\oprocend
\end{assumption}
\begin{assumption}[Diminishing stepsize]\label{assumption:stepsize}
    The sequences $\{\alpha_i^t\}_{t\geq 0}$ satisfy
    \begin{equation*}
        \sum_{t=0}^\infty \alpha_i^t = \infty, \qquad \sum_{t=0}^\infty (\alpha_i^t)^2 < \infty,
    \end{equation*}
    for all $i\in\until{N}$.
    Moreover, $\alpha_i^{t+1}\leq\alpha_i^t$ for all $t$ and all $i\in\until{N}$.   %\oprocend
\end{assumption}
Notice that, under Assumption~\ref{assumption:constant}, $a_M^t\triangleq a_M=\max_i \alpha_i$ and $a_m^t \triangleq a_m=\min_i \alpha_i$ for all $t$, while, under Assumption~\ref{assumption:stepsize} it can be easily verified that 
$$
    \sum_{t=0}^\infty a_M^t = \infty, \qquad \sum_{t=0}^\infty (a_M^t)^2 < \infty, \qquad a_M^{t+1}\leq a_M^t,
$$ 
and
$$
    \sum_{t=0}^\infty a_m^t = \infty, \qquad \sum_{t=0}^\infty (a_m^t)^2 < \infty, \qquad a_m^{t+1}\leq a_m^t.
$$

Define the vector stacking all local solution estimates as $\x(t)\triangleq[(x_1(t))^\top,\dots,(x_N(t))^\top]^\top$, and the average (over the agents) of the local estimates at $t$ as
\begin{equation}\label{eq:x_mean}
    \bar{x}(t)\triangleq\frac{1}{N}\sum_{i=1}^N x_i(t).
\end{equation}
Then, we make the following assumption on the random variables involved in the algorithm.
\begin{assumption}[Random variables]\label{assumption:random_variables}
    \hspace{1ex}
    \begin{enumerate}[label=(\Alph*)]
    \item\label{assumption:iid}For a given $i\in\until{N}$, the random variables $\ell_i^t$ and $s_i^t$ are independent and identically distributed for all $t$.%\oprocend
    \item\label{assumption:indepentent}
    For a given $t$, the random variables $s_i^t$, $\ell_i^t$ and $\xi_i^t$ are independent of each other for all $i\in\until{N}$.%\oprocend
    \item\label{assumption:initial}
    There exist constants $C_i\in[0,\infty)$ such that $\E[\|x_i^0\|]\leq C_i$ for all $i\in\until{N}$ and hence $\E[\|\x^0\|]\leq C=\sum_{i=1}^N C_i$.\oprocend
    \end{enumerate}
\end{assumption}

Before proceeding with the algorithm analysis, let us provide a preliminary instrumental result. Define $q_{i}^t=x_{i,\ell_i^t}^{t+1}-y_{i,\ell_i^t}^t$ and $q^t=[(q_1^t)^\top,\dots,(q_N^t)^\top]^\top$. Then, the following result applies.
\begin{lemma}\label{lemma:prox_bound}
    Let Assumptions~\ref{assumption:problem_structure}\ref{assumption:set} and~\ref{assumption:problem_structure}\ref{assumption:subgradients} hold. Then,
    \begin{equation*}
        \E[\|q_{i}^t\|]\leq \frac{G_i}{\sigma}\alpha_i^t,
    \end{equation*}
    for all $i\in\until{N}$, where $\sigma=\min_{\ell}\sigma_{\ell}$. 
\end{lemma}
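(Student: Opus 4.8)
The plan is to bound $\|q_i^t\|$ pathwise, in terms of the local stepsize and the norm of the computed subgradient, by exploiting the optimality condition of the proximal mapping together with the strong convexity of the distance generating function, and only at the very end to take expectations and invoke Assumption~\ref{assumption:problem_structure}\ref{assumption:subgradients}.

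First I would fix $i$ and $t$, abbreviate $\ell=\ell_i^t$, $a=y_{i,\ell}^t$, $b=g_{i,\ell}(y_i^t;\xi_i^t)$, $c=\alpha_i^t>0$, so that by~\eqref{eq:x_update} the updated block is $u^\star=\prox_\ell(a,b,c)$ and $q_i^t=u^\star-a$. The point $a$ lies in $X_\ell$ (the block iterates stay in $X_\ell$, being either proximal images or, through~\eqref{eq:y_update}, convex combinations of feasible blocks, using Assumption~\ref{assumption:problem_structure}\ref{assumption:set}), so it is an admissible comparison point. Since $u^\star$ minimizes the differentiable, $(\sigma_\ell/c)$-strongly convex function $\phi(u)=\langle b,u\rangle+\tfrac{1}{c}\nu_\ell(a,u)$ over the closed convex set $X_\ell$, with $\nabla\phi(u)=b+\tfrac{1}{c}\bigl(\nabla\omega_\ell(u)-\nabla\omega_\ell(a)\bigr)$, the first-order optimality condition gives $\langle\nabla\phi(u^\star),\,a-u^\star\rangle\geq 0$.

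Next I would expand this inequality, rearrange, and apply the $\sigma_\ell$-strong convexity of $\omega_\ell$ in the form $\langle\nabla\omega_\ell(u^\star)-\nabla\omega_\ell(a),\,u^\star-a\rangle\geq\sigma_\ell\|u^\star-a\|^2$. This yields $\tfrac{\sigma_\ell}{c}\|u^\star-a\|^2\leq\langle b,\,a-u^\star\rangle$, and Cauchy--Schwarz bounds the right-hand side by $\|b\|\,\|u^\star-a\|$. Cancelling one factor of $\|u^\star-a\|$ (the bound being trivial when it vanishes) produces the pathwise estimate $\|q_i^t\|\leq\tfrac{\alpha_i^t}{\sigma_{\ell_i^t}}\|g_{i,\ell_i^t}(y_i^t;\xi_i^t)\|$. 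Since $\sigma_{\ell_i^t}\geq\sigma=\min_\ell\sigma_\ell$ and a block norm never exceeds the full norm, $\|g_{i,\ell_i^t}(y_i^t;\xi_i^t)\|\leq\|g_i(y_i^t;\xi_i^t)\|$, this already gives $\|q_i^t\|\leq\tfrac{\alpha_i^t}{\sigma}\|g_i(y_i^t;\xi_i^t)\|$ surely.

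Finally I would take expectations. The only subtlety is that $y_i^t$ is itself random; but $\xi_i^t$ is drawn independently of the history generating $y_i^t$ and $\ell_i^t$ (Assumption~\ref{assumption:random_variables}\ref{assumption:indepentent}), so conditioning on that history and applying Assumption~\ref{assumption:problem_structure}\ref{assumption:subgradients}, which holds for every fixed argument, gives the conditional bound $\leq G_i$; the tower property (Lemma~\ref{lemma:expectation}) then removes the conditioning, delivering $\E[\|q_i^t\|]\leq\tfrac{G_i}{\sigma}\alpha_i^t$. I expect the genuinely substantive step to be the optimality-condition-plus-strong-convexity argument that turns the implicit proximal minimizer into the explicit bound $\|q_i^t\|\leq(\alpha_i^t/\sigma_\ell)\|b\|$; the measurability and expectation bookkeeping at the end is routine given the stated independence assumptions.
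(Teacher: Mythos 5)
Your proposal is correct and follows essentially the same route as the paper: the first-order optimality condition for the proximal mapping evaluated at the feasible comparison point $y_{i,\ell_i^t}^t\in X_{\ell_i^t}$, strong convexity of $\omega_{\ell_i^t}$, and Cauchy--Schwarz to obtain the pathwise bound $\|q_i^t\|\leq(\alpha_i^t/\sigma_{\ell_i^t})\|g_{i,\ell_i^t}(y_i^t;\xi_i^t)\|$, followed by taking expectations under Assumption~\ref{assumption:problem_structure}\ref{assumption:subgradients}. The only cosmetic difference is that you spell out the conditioning/tower-property bookkeeping in the final expectation step, which the paper handles implicitly via the observation that $\E[\|g_{i,\ell}(x;\xi_i)\|]\leq G_i$ for every block $\ell$.
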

\begin{proof}
    The first order necessary optimality condition on~\eqref{eq:x_update} for $\ell=\ell_i^t$ reads
    \begin{equation}\label{eq:ei}
        \langle \alpha_i^t g_{i,\ell_i^t}(y_i^t;\xi_i^t) +\nabla \omega_{\ell_i^t}(x_{i,\ell_i^t}^{t+1})-\nabla \omega_{\ell_i^t}(y_{i,\ell_i^t}^{t}), u- x_{i,\ell_i^t}^{t+1}\rangle\geq 0,
    \end{equation}
    for all $u\in X_{\ell_i^t}$. Notice now that, by definition, $y_{i,\ell_i^t}^{t}\in X_{\ell_i^t}$, since it is a weighted average of points lying in $X_{\ell_i^t}$. Thus, by taking $u=y_{i,\ell_i^t}^{t}$, one obtains
    \begin{align}
        \alpha_i^t\langle g_{i,\ell_i^t}(y_i^t;\xi_i^t), y_{i,\ell_i^t}^{t}-x_{i,\ell_i^t}^{t+1}\rangle
        &\geq\langle \nabla \omega_{\ell_i^t}(y_{i,\ell_i^t}^{t})-\nabla\omega_{\ell_i^t}(x_{i,\ell_i^t}^{t+1}), y_{i,\ell_i^t}^{t}- x_{i,\ell_i^t}^{t+1}\rangle\nonumber\\
        &\geq \sigma_{\ell_i^t}\|y_{i,\ell_i^t}^{t}- x_{i,\ell_i^t}^{t+1}\|^2,
    \end{align}
    where we have used the strong convexity of $\omega_{\ell_i^t}$. By rearranging the terms, one has
    \begin{align}
        \sigma_{\ell_i^t}\|y_{i,\ell_i^t}^{t}- x_{i,\ell_i^t}^{t+1}\|^2&\leq \alpha_i^t\langle g_{i,\ell_i^t}(y_i^t;\xi_i^t), y_{i,\ell_i^t}^{t}-x_{i,\ell_i^t}^{t+1}\rangle\nonumber\\
        &\leq\alpha_i^t\|g_{i,\ell_i^t}(y_i^t;\xi_i^t)\|\|y_{i,\ell_i^t}^{t}-x_{i,\ell_i^t}^{t+1}\|
    \end{align}
    and hence, 
    \begin{equation*}
        \|q_i^t\|\leq \frac{\alpha_i^t}{\sigma_{\ell_i^t}}\|g_{i,\ell_i^t}(y_i^t;\xi_i^t)\|\leq \frac{\alpha_i^t}{\sigma}\|g_{i,\ell_i^t}(y_i^t;\xi_i^t)\| .
    \end{equation*}
    Now, by taking the expected value and using the subgradient boundedness from Assumption~\ref{assumption:problem_structure}\ref{assumption:subgradients}, one gets
    \begin{align*}
        \E[\|q_i^t\|]\leq \frac{\alpha_i^t}{\sigma}\E[\|g_{i,\ell_i^t}(y_i^t;\xi_i^t)\|]\leq \frac{\alpha_i^t G_i}{\sigma},
    \end{align*}
    thus concluding the proof.
\end{proof}

\subsection{Dynamic consensus with random matrices}
In this section we show that the sequences $\{x_i^t\}_{t\geq 0}$ and $\{y_i^t\}_{t\geq 0}$ computed by each agent in the network asymptotically achieve consensus in expected value when using diminishing stepsizes. Moreover, an upper bound on the distance from consensus is provided in the case of constant stepsizes.

Let $\mS^t\triangleq \{\x^\tau\mid \tau\in\{0,\dots,t\}\}$ be the set of estimates generated by the \DBP up to iteration $t$ (which is indeed a filtration). 
Moreover, define the probability of node $i$ to both be awake and pick block $\ell$ at each iteration as 
$$
    \pi_{i,\ell}\triangleq p_{i,on} p_{i,\ell}.
$$

Then, the following lemma provides a bound on the expected distance between $x_i^t$ and the average $\bx^t$ (defined in~\eqref{eq:x_mean}).
\begin{lemma}\label{lemma:xi-bx}
  Let Assumptions~\ref{assumption:problem_structure}\ref{assumption:subgradients},~\ref{assumption:communication},~\ref{assumption:random_variables} hold. Then, there exist constants $M\in(0,\infty)$
  and $\mu_M\in(0,1)$ such that
\begin{align}
    \E [\|x_i^{t}-\bx^{t}\|]& \leq MB\Bigg((\mu_M)^{t-1}C+\frac{G}{\sigma}\sum_{s=0}^{t-2}(\mu_{M})^{t-s-2} a_M^s+\frac{G}{\sigma} a_M^{t-1}\Bigg),\label{eq:xi-bx}
\end{align}
for all $i\in\until{N}$ and all $t\geq 1$.
\end{lemma}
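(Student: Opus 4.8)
The plan is to track how the per-agent estimates $x_i^t$ deviate from their average $\bar{x}^t$ by viewing the algorithm's consensus step as a perturbed linear dynamics over the random row-stochastic matrices $\tWl^t$, and then apply the contraction bound from Lemma~\ref{lemma:consensus_stochastic} together with the per-step proximal error bound from Lemma~\ref{lemma:prox_bound}. The key structural observation (already set up in the excerpt) is that, block by block, the stacked vector $\zl^t$ evolves as $\zl^{t+1} = \tWl^t \zl^t + (\text{perturbation})$, where the perturbation on block $\ell$ at agent $i$ is exactly the proximal increment $q_i^t = x_{i,\ell_i^t}^{t+1} - y_{i,\ell_i^t}^t$ (nonzero only when $s_i^t=1$ and $\ell_i^t=\ell$). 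Let me write this recursion explicitly per block and unroll it.

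First I would fix a block index $\ell$ and write $\zl^{t} = \Phi_{\tWl}^{t-1,0}\zl^0 + \sum_{s=0}^{t-1}\Phi_{\tWl}^{t-1,s+1} r_\ell^s$, where $r_\ell^s$ is the perturbation vector carrying the proximal increments at step $s$ (with the convention that the transition matrix is the identity when $s+1>t-1$). The quantity I want to control is $x_{i,\ell}^t - \bar{x}_\ell^t$, which is a component of $\zl^t$ minus its (weighted) average; since each $\tWl^t$ is row-stochastic and the average is preserved in the unperturbed dynamics up to the perturbation, the distance-from-consensus functional $d(\cdot)$ applied to $\zl^t$ captures the spread across agents. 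I would bound $\|x_{i,\ell}^t-\bar{x}_\ell^t\|$ by $d(\zl^t)$ (up to the usual factor relating max-deviation-from-average to max-minus-min), then sum over the $B$ blocks to recover $\|x_i^t-\bar{x}^t\|\le \sum_\ell \|x_{i,\ell}^t-\bar x_\ell^t\|$, which is where the factor $B$ in~\eqref{eq:xi-bx} originates.

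Next I would apply Lemma~\ref{lemma:consensus_stochastic} to the homogeneous part: the hypotheses on $\tWl^t$ (each $\tWl^t\ge\delta I$ and the expected product over a window contains a $\delta$-spanning tree, which follows from Assumption~\ref{assumption:communication} together with $\pi_{i,\ell}>0$ ensuring every node is, in expectation, updated and connected) give the geometric contraction $\E[d(\Phi_{\tWl}^{t-1,s+1}v)]\le M(\mu_M)^{t-s-2}\E[\|v\|]$. Applying this to the initial term yields the $(\mu_M)^{t-1}C$ contribution via Assumption~\ref{assumption:random_variables}\ref{assumption:initial}, and applying it to each perturbation term $r_\ell^s$ yields the convolution sum $\sum_{s}(\mu_M)^{t-s-2}\E[\|r_\ell^s\|]$. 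The final bare term $a_M^{t-1}$ comes from the last perturbation ($s=t-1$), which enters without any contraction factor. To bound $\E[\|r_\ell^s\|]$ I would invoke Lemma~\ref{lemma:prox_bound}, giving $\E[\|q_i^s\|]\le (G_i/\sigma)\alpha_i^s$, and then sum the per-agent bounds, using $G=\sum_i G_i$ and $\alpha_i^s\le a_M^s$ to collapse everything into $\tfrac{G}{\sigma}a_M^s$.

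The main obstacle I anticipate is the careful handling of the conditional independence and the measurability bookkeeping: the matrices $\tWl^t$ depend on the random awakening/block-selection variables $s_i^t,\ell_i^t$, while the perturbations $q_i^t$ depend additionally on the stochastic subgradients $\xi_i^t$, so to separate the contraction estimate (which is about the $\tWl$ process) from the magnitude estimate (which is about $q$) I would need to condition appropriately on the filtration $\mS^t$ and exploit the independence granted by Assumption~\ref{assumption:random_variables}\ref{assumption:iid}--\ref{assumption:indepentent}, most likely via the tower property (Lemma~\ref{lemma:expectation}). A subtle point is that Lemma~\ref{lemma:consensus_stochastic} requires the initial condition to be independent of the matrix sequence, whereas here each perturbation $r_\ell^s$ is \emph{not} independent of the future matrices $\tWl^{s+1},\dots,\tWl^{t-1}$ in general; I expect the resolution is that the contraction bound in Lemma~\ref{lemma:consensus_stochastic} holds in conditional expectation given $\mS^{s+1}$ (since $r_\ell^s$ is $\mS^{s+1}$-measurable while the subsequent matrices' conditional spanning-tree property is what Lemma~\ref{lemma:consensus_stochastic} actually uses), so that taking expectations and applying the tower property legitimizes treating $r_\ell^s$ as a valid ``initial condition'' for the tail dynamics. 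Assembling these pieces and bounding the sum by $a_M^s$ term-by-term then gives~\eqref{eq:xi-bx} directly.
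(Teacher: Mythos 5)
Your proposal is correct and follows essentially the same route as the paper's proof: the same block-wise stacked recursion $\zl^{t+1}=\tWl^t\zl^t+e_\ell^t$ with $e_\ell^t=\UU_\ell^t q^t$, the same unrolling and application of the $d(\cdot)$ functional, the same verification of the hypotheses of Lemma~\ref{lemma:consensus_stochastic} via $\E[\tWl^t]=I-\DD_\ell+\DD_\ell\WW$, the same use of Lemma~\ref{lemma:prox_bound} to bound the perturbations by $\tfrac{G}{\sigma}a_M^s$, and the same summation over blocks producing the factor $B$ with $\mu_M=\max_\ell\mu_\ell$. The measurability concern you raise at the end is resolved even more simply than you suggest: by Assumption~\ref{assumption:random_variables}\ref{assumption:iid} the matrices $\tWl^{s+1},\dots,\tWl^{t}$ depend only on the draws $\{s_i^\tau,\ell_i^\tau\}_{\tau\geq s+1}$, which are i.i.d.\ and hence independent of the $\mS^{s+1}$-measurable perturbation $e_\ell^s$, so Lemma~\ref{lemma:consensus_stochastic} applies directly.
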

\begin{proof}
  For the sake of presentation, assume that the blocks are scalars, i.e., $B=n$.
  Let us recall that $\zl^t$ defines the vector stacking the $\ell$-th component
  of all the $x_i^t$, i.e., $\zl^t\triangleq[x_{1,\ell}^t, \dots, x_{N,\ell}^t]^\top$,
  while the matrix $\UU_{\ell}^t\in\R^{N\times N}$ is a diagonal matrix in which the $i$-th
  element of the diagonal is set to $1$ if $s_i^t=1$ and $\ell_i^t=\ell$ and it is set to $0$
  otherwise, i.e.,
\begin{equation*}
    [\UU_{\ell}^t]_{ij} = \begin{cases}
        1,&\text{if }i=j\text{, }\ell=\ell_i^t\text{ and }s_i^t=1,\\
        0,&\text{otherwise}.
    \end{cases}
\end{equation*}
Consistently, we let $\UU_{-\ell}^t=I-\UU_{\ell}^t$. %
Notice that, for all $t$,
$\UU_{\ell}^t$ is a random matrix whose diagonal
element $[\UU_{\ell}^t]_{ii}$ is $1$ with probability $\pi_{i,\ell}$ and $0$ with
probability $1-\pi_{i,\ell}$. Define 
$$
\DD_\ell\triangleq\diag{[\pi_{1,\ell},\dots,\pi_{N,\ell}]}.
$$
Then, by using Assumption~\ref{assumption:random_variables}\ref{assumption:iid}, it can be verified that
\begin{equation}\label{eq:Ul}
    \E[\UU_\ell^t\mid \mS^{t}]=\E[\UU_\ell^t]= \DD_\ell
\end{equation}
and, similarly,
\begin{equation}\label{eq:Uml}
    \E[\UU_{-\ell}^t\mid \mS^{t}]=\E[\UU_{-\ell}^t]= I-\DD_\ell
\end{equation}
for all $t$.

Now, Algorithm~\ref{alg:DBS} can be rewritten with respecte to $\zl$ as
\begin{equation}\label{eq:zl}
    \zl^{t+1}=\tWl^t\zl^{t}+e_{\ell}^t, 
\end{equation}
where $\tWl^t\triangleq\UU_{-\ell}^t + \UU_{\ell}^t \WW$ is, by definition, a row-stochastic matrix, and $e_{\ell}^t\triangleq\UU_\ell^t q^t$.
Now, by recursively
applying~\eqref{eq:zl}, it holds that
\begin{equation*}
    \zl^{t+1}=\Pl^{t,0}\zl^0+\sum_{s=0}^{t-1}\Pl^{t,s+1}e_{\ell}^s + e_{\ell}^{t},
\end{equation*}
where $\Pl^{t,s}$ is the transition matrix from iteration $s$ to iteration $t$ associated to the matrices
$\tWl^{\tau}$, $\tau=s,\dots,t$. Moreover, by applying the $d(\cdot)$ operator on both sides (recall that
$d(\zl)=\max_{1\leq i\leq N}z_{\ell,i}-\min_{1\leq i\leq N}z_{\ell,i}$), one has
\begin{equation}\label{eq:d_evol}
    d(\zl^{t+1})\leq d(\Pl^{t,0}\zl^0)+\sum_{s=0}^{t-1}d(\Pl^{t,s+1}e^s) + d(e^t).
\end{equation}
Notice now that, by using~\eqref{eq:Ul} and~\eqref{eq:Uml},
\begin{align*}
    \E\left[ \tWl^t \right]&=\E[\UU_{-\ell}^t]+\E[\UU_{\ell}^t] \WW \nonumber\\
    &= I - \DD_\ell + \DD_\ell\WW
\end{align*}
for all $t$. 
It can be seen that such a matrix is row stochastic and contains a $\delta$-spanning tree with $\delta\geq (\eta\min_{i}\pi_{i,\ell}+\min_{i}(1-\pi_{i,\ell}))>0$ (since from Assumption~\ref{assumption:communication} the matrix $\WW$ contains a spanning tree), where $\eta$ is defined in Assumption~\ref{assumption:communication}\ref{assumption:stochastic}. Moreover, by
Assumption~\ref{assumption:random_variables}\ref{assumption:iid}, $\{\tWl^t\}_{t\geq 0}$ is a sequence of
i.i.d. random matrices with $\tWl^t\geq\eta I$. Hence, from Lemma~\ref{lemma:consensus_stochastic}, by
taking the expectation on both sides of~\eqref{eq:d_evol}, we get
\begin{align*}
    \E[d(\zl^{t+1})]&\leq \E[d(\Pl^{t,0}\zl^0)]+\sum_{s=0}^{t-1}\E[d(\Pl^{t,s+1}e_{\ell}^s)]+\E[d(e_{\ell}^t)]\nonumber\\
    &\leq (\mu_\ell)^{t} \E[d(\zl^0)]+\sum_{s=0}^{t-1}(\mu_{\ell})^{t-s-1}\E[d(e_{\ell}^s)]+\E[d(e_{\ell}^t)]\nonumber\\
    &\leq M \left((\mu_\ell)^{t} \E[\|\zl^0\|]+ \frac{G}{\sigma} \sum_{s=0}^{t-1}(\mu_{\ell})^{t-s-1} a_M^s+\frac{G}{\sigma}  a_M^t\right).
\end{align*}
where we used the fact that, from Lemma~\ref{lemma:prox_bound}, 
$$\E[\|e_{\ell}^t\|]\leq \E[\|q^t\|]\leq \sum_{i=1}^N\E[\|q_i^t\|]\leq \frac{G}{\sigma} a_M^t.$$
Let us now define
\begin{equation*}
    \bzl^t\triangleq\frac{1}{N}\sum_{i=1}^N z_{\ell,i}^t.
\end{equation*}
Since $\min_{j}z_{\ell,j}^t\leq\bzl^t\leq\max_{j}z_{\ell,j}^t$, for all $t$, we have that
\begin{equation*}
    |z_{\ell,i}^t-\bzl^t|\leq \max_{j}z_{\ell,j}^t-\min_{j}z_{\ell,j}^t
\end{equation*}
for all $i\in\until{N}$.
Notice now that, by definition $x_{i,\ell}^t=z_{\ell,i}^t$ and $\bx_\ell^t=\bzl^t$. Hence,
\begin{align*}
    \E&[|x_{i,\ell}^{t}-\bx_\ell^{t}|] \leq M \left((\mu_{\ell})^{t-1} \E[\|\zl^0\|]+ \frac{G}{\sigma}  \sum_{s=0}^{t-2}(\mu_{\ell})^{t-s-2} a_M^s + \frac{G}{\sigma}  a_M^{t-1}\right).
\end{align*}
Finally, since $\|x_i^t-\bx^t\|\leq\sum_{\ell=1}^B|x_{i,\ell}^t-\bx_\ell^t|$, one has
\begin{align*}
    \E[\|x_i^{t}-\bx^{t}\|]&\leq \sum_{\ell=1}^B M\Bigg( (\mu_{\ell})^{t-1} \E[\|\zl^0\|]+ \frac{G}{\sigma}  \sum_{s=0}^{t-2}(\mu_{\ell})^{t-s-2}a_M^s + \frac{G}{\sigma}  a_M^{t-1}\Bigg)\nonumber\\
    &\leq MB\Bigg((\mu_M)^{t-1}\E[\|\x^0\|] +\frac{G}{\sigma} \sum_{s=0}^{t-2}(\mu_{M})^{t-s-2} a_M^s + \frac{G}{\sigma}  a_M^{t-1}\Bigg),
\end{align*}
where $\mu_M=\max_{\ell}\mu_\ell$. The proof is concluded by using Assumption~\ref{assumption:random_variables}\ref{assumption:initial}.
\end{proof}
Moreover, the expected value of the distance between $y_i^t$ and $x_i^t$ can be bounded, by exploiting the convexity of the norm and using Lemma~\ref{lemma:xi-bx}, as stated in the next result.
\begin{lemma}\label{lemma:yx}
    Let Assumptions~\ref{assumption:problem_structure}\ref{assumption:subgradients},~\ref{assumption:communication},~\ref{assumption:random_variables} hold. Then, 
    \begin{align*}
        \E[\|y_i^{t}-x_i^t\|]&\leq 2MB\Bigg((\mu_M)^{t-1}C+\frac{G}{\sigma}\sum_{s=0}^{t-2}(\mu_{M})^{t-s-2} a_M^s+\frac{G}{\sigma} a_M^{t-1}\Bigg)
    \end{align*}
    for all $i\in\until{N}$ and all $t\geq 1$.
\end{lemma}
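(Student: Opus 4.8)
The plan is to reduce everything to the consensus bound already established in Lemma~\ref{lemma:xi-bx} by inserting the running average $\bx^t$ as an intermediate point. First I would rewrite, using the consensus update \eqref{eq:y_update} together with the row-stochasticity $\sum_{j=1}^N w_{ij}=1$ from Assumption~\ref{assumption:communication}\ref{assumption:stochastic},
\begin{equation*}
y_i^t-\bx^t=\sum_{j=1}^N w_{ij}x_j^t-\bx^t=\sum_{j=1}^N w_{ij}(x_j^t-\bx^t),
\end{equation*}
so that convexity of the norm (the weights $w_{ij}$ are nonnegative and sum to one) yields $\|y_i^t-\bx^t\|\le\sum_{j=1}^N w_{ij}\|x_j^t-\bx^t\|$.

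Next I would apply the triangle inequality through $\bx^t$, namely $\|y_i^t-x_i^t\|\le\|y_i^t-\bx^t\|+\|x_i^t-\bx^t\|$, take expectations, and bound each of the two terms by means of Lemma~\ref{lemma:xi-bx}. Denoting by $\Delta^t$ the right-hand side of \eqref{eq:xi-bx}, which does not depend on the agent index, I would obtain $\E[\|y_i^t-\bx^t\|]\le\sum_{j=1}^N w_{ij}\,\Delta^t=\Delta^t$ (again using $\sum_{j=1}^N w_{ij}=1$) and $\E[\|x_i^t-\bx^t\|]\le\Delta^t$. Adding the two contributions gives $\E[\|y_i^t-x_i^t\|]\le 2\Delta^t$, which is exactly the claimed bound.

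There is no serious obstacle here: the argument is essentially a one-line triangle-inequality estimate resting on Lemma~\ref{lemma:xi-bx}. The only point requiring a little care is to route the comparison through the average $\bx^t$ rather than comparing $y_i^t$ and $x_i^t$ directly; this is what lets me reuse the consensus estimate uniformly in $j$ and collapse the weighted sum back to a single $\Delta^t$ by row-stochasticity, thereby producing precisely the factor $2$ appearing in the statement.
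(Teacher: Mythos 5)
Your proposal is correct and follows essentially the same route as the paper: both proofs insert the average $\bx^t$, exploit convexity of the norm with the row-stochastic weights $w_{ij}$, and invoke Lemma~\ref{lemma:xi-bx} twice to produce the factor $2$; the only (immaterial) difference is that the paper first bounds $\|y_i^t-x_i^t\|\leq\sum_{j}w_{ij}\|x_j^t-x_i^t\|$ and then triangulates each term through $\bx^t$, whereas you triangulate through $\bx^t$ first and then apply convexity to $\|y_i^t-\bx^t\|$. No gap; the argument is sound as written.
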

\begin{proof}
    Form the definition of $y_i^t$ and using the convexity of the norm, one has
    \begin{align*}
        \|y_i^{t}-x_i^t\|&=\|\sum_{j=1}^N w_{ij}x_j^{t}-x_i^t\|\\
        &\leq \sum_{j=1}^N w_{ij}\|x_j^{t}-x_i^t\|\nonumber\\
        &\leq \sum_{j=1}^N w_{ij} \left(\|x_j^{t}-\bx^t\|+\|x_i^{t}-\bx^t\|\right).
    \end{align*}
    By taking the expected value on both sides and using Lemma~\ref{lemma:xi-bx}, the proof follows by noting that $\sum_{j=1}^N w_{ij}=1$ from Assumption~\ref{assumption:communication}\ref{assumption:stochastic}.
\end{proof}

\subsubsection{Constant stepsize}
The following two results respectively provide an upper bound on the distance of $x_i^t$ from $\bx^t$ as $t\to\infty$ and characterize the quantity $\sum_{\tau=0}^t \E[\|x_i^\tau-\bx^\tau\|]$ for each $t$ in the case of constant stepsizes.
\begin{lemma}\label{lemma:xi-bx_constant}
    Let Assumptions~\ref{assumption:problem_structure}\ref{assumption:subgradients},~\ref{assumption:communication},~\ref{assumption:constant},~\ref{assumption:random_variables} hold. Then, there exist constants $M\in(0,\infty)$
    and $\mu_M\in(0,1)$ such that
  \begin{equation*}
      \lim_{t\to\infty}\E[\|x_i^{t}-\bx^{t}\|] \leq \bS%\label{eq:consensus_constant}
  \end{equation*}
  for all $i\in\until{N}$, with
  \begin{equation}
      \bS=a_M \frac{MBG}{\sigma}\frac{2-\mu_M}{1-\mu_M}.
  \end{equation}
\end{lemma}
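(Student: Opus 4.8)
The plan is to specialize the general bound from Lemma~\ref{lemma:xi-bx} to the constant-stepsize regime and then pass to the limit. Under Assumption~\ref{assumption:constant} we have $a_M^t=a_M$ for all $t$, so the bound~\eqref{eq:xi-bx} becomes
\[
\E[\|x_i^{t}-\bx^{t}\|]\leq MB\left((\mu_M)^{t-1}C+\frac{G}{\sigma}\,a_M\sum_{s=0}^{t-2}(\mu_{M})^{t-s-2}+\frac{G}{\sigma}\,a_M\right),
\]
where I have factored the constant $a_M$ out of both the summation and the final term. The only quantity depending on $t$ in a nontrivial way is now the finite geometric sum.

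Next I would evaluate that sum. Reindexing with $k=t-s-2$ turns $\sum_{s=0}^{t-2}(\mu_M)^{t-s-2}$ into $\sum_{k=0}^{t-2}(\mu_M)^k$, which by Lemma~\ref{lemma:series}\ref{item:1} (taking $r=0$) equals $\frac{1-(\mu_M)^{t-1}}{1-\mu_M}$. Since $\mu_M\in(0,1)$, both occurrences of $(\mu_M)^{t-1}$ — the one multiplying $C$ and the one appearing in the evaluated sum — vanish as $t\to\infty$, so the limit of the geometric sum is $\frac{1}{1-\mu_M}$. Taking the limit then leaves only the two stepsize-dependent contributions:
\[
\lim_{t\to\infty}\E[\|x_i^{t}-\bx^{t}\|]\leq MB\,\frac{G}{\sigma}\,a_M\left(\frac{1}{1-\mu_M}+1\right)=a_M\,\frac{MBG}{\sigma}\,\frac{2-\mu_M}{1-\mu_M}=\bS,
\]
using the identity $\frac{1}{1-\mu_M}+1=\frac{2-\mu_M}{1-\mu_M}$. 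This matches the claimed constant $\bS$ exactly, and since the bound in Lemma~\ref{lemma:xi-bx} holds uniformly in $i$, so does the present one; the constants $M$ and $\mu_M$ are inherited directly from that lemma.

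The computation is elementary, so I do not expect a genuine obstacle here. The only points requiring care are the correct reindexing of the finite geometric sum and the justification that the limit superior in the statement is well defined: the right-hand side of the specialized bound is monotone increasing in $t$ and dominated by the full geometric series, hence its limit exists and equals $\bS$. I would present the reindexing step explicitly to avoid off-by-one errors in the exponent.
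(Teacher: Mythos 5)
Your proposal is correct and takes essentially the same route as the paper: both specialize the bound of Lemma~\ref{lemma:xi-bx} to constant stepsizes and pass to the limit term by term, the only difference being that you evaluate the finite geometric sum in closed form via Lemma~\ref{lemma:series}(i) while the paper cites Lemma~\ref{lemma:series_nedic}(i) to obtain the limit $\frac{a_M}{1-\mu_M}$ directly. One minor slip in your closing justification: the specialized right-hand side is not monotone increasing in $t$ (the term $MB(\mu_M)^{t-1}C$ is decreasing), but this is immaterial because each of its three terms converges, so the limit of the bound exists and equals $\bS$.
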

\begin{proof}
    Equation~\eqref{eq:xi-bx} in Lemma~\ref{lemma:xi-bx} consists of three terms. For the first one,
  $\lim_{t\to\infty}(\mu_M)^{t-1}C = 0$, since
  $\mu_M<1$. For the second term $\sum_{s=0}^{t-2}(\mu_{M})^{t-s-2} a_M^s$, by
  Assumption~\ref{assumption:constant} and Lemma~\ref{lemma:series_nedic}, one
  has that $\lim_{t\to\infty}\sum_{s=0}^{t-2}(\mu_{M})^{t-s-2} a_M^s=\frac{a_M}{1-\mu_M}$. The proof is completed by noting that, under Assumption~\ref{assumption:constant} the last term is constant. 
\end{proof}
\begin{lemma}\label{lemma:x_constant}
    Let Assumptions~\ref{assumption:problem_structure}\ref{assumption:subgradients},~\ref{assumption:communication},~\ref{assumption:constant},~\ref{assumption:random_variables} hold.  Then, 
    \begin{align*}
        \sum_{\tau=0}^t&  \E[\|x_i^{\tau}-\bx^{\tau}\|]\leq  (\mu_M)^t \bR + t \bS + \bQ
    \end{align*}
    for all $i\in\until{N}$, with
    \begin{align}
        \bR&= MB\left(\frac{a_M G}{\sigma(1-\mu_M)^2}-\frac{C}{1-\mu_M} \right),\\
        \bQ&= C-a_M\frac{MBG}{\sigma}\frac{1}{(1-\mu_M)^2}.
    \end{align}

\end{lemma}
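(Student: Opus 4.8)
The plan is to sum the bound from Lemma~\ref{lemma:xi-bx} over $\tau=0,\dots,t$ and then evaluate the resulting three nested sums in closed form using Lemma~\ref{lemma:series}. Since we are under Assumption~\ref{assumption:constant} (constant stepsize), we have $a_M^s=a_M$ for all $s$, so each summand simplifies considerably and the only genuine work is bookkeeping on geometric series. Concretely, Lemma~\ref{lemma:xi-bx} gives, for each $\tau\geq 1$,
\[
\E[\|x_i^{\tau}-\bx^{\tau}\|]\leq MB\Bigl((\mu_M)^{\tau-1}C+\tfrac{G}{\sigma}a_M\sum_{s=0}^{\tau-2}(\mu_M)^{\tau-s-2}+\tfrac{G}{\sigma}a_M\Bigr),
\]
and I would handle the $\tau=0$ term separately via Assumption~\ref{assumption:random_variables}\ref{assumption:initial} (namely $\E[\|x_i^0-\bx^0\|]\leq C$, or simply absorb it into the constant).

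**First I would** treat the three pieces of the summed bound independently. The first piece contributes $MBC\sum_{\tau=1}^{t}(\mu_M)^{\tau-1}$, which by Lemma~\ref{lemma:series}\ref{item:1} equals $MBC\frac{1-(\mu_M)^{t}}{1-\mu_M}$. The third piece contributes $MB\frac{G}{\sigma}a_M\cdot t$ (roughly), giving the dominant $t\bS$-type linear growth. The middle piece is the double sum $MB\frac{G}{\sigma}a_M\sum_{\tau=0}^{t}\sum_{s=0}^{\tau-2}(\mu_M)^{\tau-s-2}$; after re-indexing the inner sum as $\sum_{k=0}^{\tau-2}(\mu_M)^{k}$, this is exactly the shape handled by Lemma~\ref{lemma:series}\ref{item:2}, yielding a term proportional to $\frac{t+1-\mu_M(t+2)+(\mu_M)^{t+2}}{(1-\mu_M)^2}$.

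**The next step** is to collect the linear-in-$t$ contributions, the constant contributions, and the $(\mu_M)^t$ contributions separately, and check that they match the claimed $\bR$, $\bS$, $\bQ$. The coefficient of $t$ coming out of Lemma~\ref{lemma:series}\ref{item:2} is $\frac{1-\mu_M}{(1-\mu_M)^2}=\frac{1}{1-\mu_M}$ from the middle piece, plus the $t$ from the third piece, and these should combine (after multiplying by $MB\frac{G}{\sigma}a_M$) into exactly $a_M\frac{MBG}{\sigma}\frac{2-\mu_M}{1-\mu_M}=\bS$, consistent with the asymptotic slope already identified in Lemma~\ref{lemma:xi-bx_constant}. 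The coefficient of $(\mu_M)^t$ should assemble into $\bR$, and the remaining constant into $\bQ$.

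**The main obstacle** I anticipate is purely algebraic rather than conceptual: correctly aligning the index shifts (the Lemma~\ref{lemma:xi-bx} bound is stated for $t\geq 1$ with exponents like $\tau-1$ and $\tau-s-2$, whereas Lemma~\ref{lemma:series}\ref{item:2} is stated with a clean inner range $\sum_{\tau'=0}^{t-s}$), and ensuring that the $\mu_M$-weighted geometric factors land on precisely the exponents appearing in $\bR$ and $\bQ$. I would also need to verify that the constant $C$-terms from the first piece regroup so that the $-\frac{C}{1-\mu_M}$ appears inside $\bR$ (multiplied by $(\mu_M)^t$) while the leading $+C$ goes into $\bQ$; this splitting of $\frac{C(1-(\mu_M)^t)}{1-\mu_M}$ into its constant and decaying parts is the delicate point where a sign error is easiest. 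Once the three families of terms are matched against $(\mu_M)^t\bR+t\bS+\bQ$, the proof is complete.
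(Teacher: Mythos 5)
Your plan is exactly the paper's proof: bound the $\tau=0$ term by $C$ via Assumption~\ref{assumption:random_variables}\ref{assumption:initial}, sum the bound of Lemma~\ref{lemma:xi-bx} over $\tau\geq 1$ with $a_M^s=a_M$, re-index the double geometric sum, and evaluate everything with Lemma~\ref{lemma:series}. The grouping into the $t$-term and the $(\mu_M)^t$-term works out as you predict: the coefficient of $t$ is $\frac{MBGa_M}{\sigma}\bigl(1+\frac{1}{1-\mu_M}\bigr)=\bS$, and the coefficient of $(\mu_M)^t$ is $MB\bigl(\frac{a_M G}{\sigma(1-\mu_M)^2}-\frac{C}{1-\mu_M}\bigr)=\bR$.

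However, the final check you defer (``the remaining constant into $\bQ$'') does not close as stated. Carrying out the algebra, the constant term is
\[
C+\frac{MBC}{1-\mu_M}-\frac{a_M MBG}{\sigma(1-\mu_M)^2}=\bQ+\frac{MBC}{1-\mu_M},
\]
i.e., the non-decaying part $\frac{MBC}{1-\mu_M}$ of the first geometric series $MBC\frac{1-(\mu_M)^t}{1-\mu_M}$ does not cancel against anything and is not contained in the printed $\bQ$; only the $+C$ coming from the $\tau=0$ term is. This is a defect in the lemma's stated constant rather than in your argument: the paper itself uses the corrected value downstream, since in Theorem~\ref{theorem:bound_constant} the constant $Q$ contains $4G\bigl(\frac{MBC}{1-\mu_M}+\bQ\bigr)$, which is exactly the constant your computation produces. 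So your proof is sound provided you conclude with the constant $\bQ+\frac{MBC}{1-\mu_M}$ (or redefine $\bQ$ accordingly); if instead you insist on matching the printed $\bQ$, the verification step you flagged as delicate will indeed fail, and no sign bookkeeping can fix it.
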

\begin{proof}
    By using Assumption~\ref{assumption:random_variables}\ref{assumption:initial}, for $\tau=0$, one has
    \begin{align}\label{eq:x0}
        \E[\|x_i^{0}-\bx^{0}\|]&\leq  \E[\|x_i^{0}\|] +\E[\|\bx^{0}\|]\nonumber\\
        &\leq C_i +\frac{1}{N}\sum_{j=1}^N C_j \leq C_i +\max_j C_j \leq C
    \end{align}
    Hence, $\sum_{\tau=0}^t  \E[\|x_i^{\tau}-\bx^{\tau}\|]\leq C +\sum_{\tau=1}^t  \E[\|x_i^{\tau}-\bx^{\tau}\|]$ and, from Lemma~\ref{lemma:xi-bx}, we have 
    \begin{align*}
        \sum_{\tau=0}^t  \E[\|x_i^{\tau}-\bx^{\tau}\|]
        &\leq C + \sum_{\tau=1}^t  MB\Bigg((\mu_M)^{\tau-1}C+\frac{G}{\sigma}\sum_{s=0}^{\tau-2}(\mu_{M})^{\tau-s-2} a_M+\frac{G}{\sigma} a_M\Bigg)\\
        &= C + MBC\sum_{\tau=0}^{t-1}(\mu_M)^{\tau}+\frac{MBGa_M}{\sigma}\sum_{\tau=2}^t\sum_{s=0}^{\tau-2}(\mu_{M})^{\tau-s-2} + \sum_{\tau=0}^{t-1}\frac{MBG a_M}{\sigma}
    \end{align*}
    where in the last line we have rearranged the summations. Now, by noting that 
    $$
    \sum_{\tau=2}^t\sum_{s=0}^{\tau-2}(\mu_{M})^{\tau-s-2}=\sum_{\tau=0}^{t-2}\sum_{s=0}^{t-2-s}(\mu_{M})^{s}
    $$
    and using Lemma~\ref{lemma:series} the result follows through straightforward manipulations.
\end{proof}
Notice that in virtue of Lemma~\ref{lemma:yx}, by using the same reasoning used in the previous two results it is possible to show that
\begin{equation*}
    \lim_{t\to\infty}\E[\|y_i^{t}-x_i^{t}\|] \leq 2\bS%\label{eq:consensus_constant}
\end{equation*}
and
\begin{align*}
    \sum_{\tau=0}^t &\E[\|y_i^{\tau}-x_i^{\tau}\|]\leq 2\bigg((\mu_M)^t \bR + t \bS + \bQ)\bigg).
\end{align*}
These bounds will be used in the following optimality analysis.

\subsubsection{Diminishing stepsize}
When adopting diminishing stepsizes, asymptotic (exact) consensus can be reached as stated in the following result. 
\begin{lemma}\label{lemma:asymptotic_consensus}
    Let Assumptions~\ref{assumption:problem_structure}\ref{assumption:subgradients},~\ref{assumption:communication},~\ref{assumption:stepsize},~\ref{assumption:random_variables} hold. Then, 
\begin{equation*}
   \lim_{t\to\infty} \E[\|x_i^t-\bx^t\|]=0
\end{equation*}
for all $i\in\until{N}$.
\end{lemma}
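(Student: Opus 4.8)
The plan is simply to pass to the limit in the bound of Lemma~\ref{lemma:xi-bx} and to check that each of its three summands vanishes. Recall that, for all $i\in\until{N}$ and all $t\geq 1$,
\[
    \E[\|x_i^{t}-\bx^{t}\|] \leq MB\Bigg((\mu_M)^{t-1}C+\frac{G}{\sigma}\sum_{s=0}^{t-2}(\mu_{M})^{t-s-2} a_M^s+\frac{G}{\sigma} a_M^{t-1}\Bigg),
\]
with $M\in(0,\infty)$ and $\mu_M\in(0,1)$. Since the left-hand side is nonnegative, it suffices to show that this upper bound tends to $0$ as $t\to\infty$.

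First I would record the only consequence of the diminishing stepsize assumption that is actually needed: as already observed right after Assumption~\ref{assumption:stepsize}, one has $\sum_{t=0}^\infty (a_M^t)^2<\infty$ together with $a_M^{t+1}\leq a_M^t$, so that $a_M^t\geq 0$ is non-increasing with summable squares, which forces $a_M^t\to 0$ as $t\to\infty$. Everything else in the argument is deterministic manipulation of the bound above.

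With this in hand the three terms are handled in turn. The first term $(\mu_M)^{t-1}C$ tends to $0$ because $\mu_M\in(0,1)$, while the third term $\frac{G}{\sigma}a_M^{t-1}$ tends to $0$ by the observation above. For the middle (convolution) term, the index shift $t'=t-2$ rewrites $\sum_{s=0}^{t-2}(\mu_{M})^{t-s-2} a_M^s$ as $\sum_{s=0}^{t'}(\mu_M)^{t'-s} a_M^s$, which is exactly the form treated in Lemma~\ref{lemma:series_nedic}(i) with $\beta=\mu_M\in(0,1)$ and $\gamma^s=a_M^s$; since $a_M^s\to 0$, its limit equals $\frac{0}{1-\mu_M}=0$. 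Collecting the three limits, the whole right-hand side converges to $0$, and nonnegativity of $\E[\|x_i^t-\bx^t\|]$ then yields $\lim_{t\to\infty}\E[\|x_i^t-\bx^t\|]=0$ for every $i\in\until{N}$. There is no genuine obstacle here: the argument is a routine limit computation, and the only point deserving attention is matching the convolution term to the hypotheses of Lemma~\ref{lemma:series_nedic}(i) and recognizing that diminishing stepsizes enter solely through $a_M^t\to 0$ (in contrast to the constant-stepsize case, where the same convolution term converges to the nonzero limit $\frac{a_M}{1-\mu_M}$ that produces the residual $\bS$ in Lemma~\ref{lemma:xi-bx_constant}).
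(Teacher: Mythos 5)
Your proof is correct and follows essentially the same route as the paper: both pass to the limit in the bound of Lemma~\ref{lemma:xi-bx}, note that $a_M^t\to 0$ under Assumption~\ref{assumption:stepsize}, and invoke Lemma~\ref{lemma:series_nedic} to kill the convolution term. Your write-up is merely more explicit than the paper's (which compresses this into one sentence referring back to Lemma~\ref{lemma:xi-bx_constant}), and your observation that diminishing stepsizes enter only through $a_M^t\to 0$ is accurate.
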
 
\begin{proof}
  The proof is based on the same arguments as in Lemma~\ref{lemma:xi-bx_constant}. By noting that $ a_M^t\to 0$ as $t\to\infty$ and that, under Assumption~\ref{assumption:stepsize}, from Lemma~\ref{lemma:series_nedic}, $\lim_{t\to\infty}\sum_{s=0}^{t-2}(\mu_{M})^{t-s-2} a_M^s=0$, the result follows.
\end{proof}
The next result shows that
$\lim_{t\to\infty} \sum_{\tau=0}^t a_m^{\tau}\E[\|x_i^\tau-\bx^\tau\|]$ is a summable series for all $i\in\until{N}$. Notice that this result does not hold in the case of constant stepsizes.
\begin{lemma}\label{lemma:xm}
    Let Assumptions~\ref{assumption:problem_structure}\ref{assumption:subgradients},~\ref{assumption:communication},~\ref{assumption:stepsize},~\ref{assumption:random_variables} hold.  Then, 
    \begin{equation*}
        \lim_{t\to\infty}\sum_{\tau=0}^t a_m^{\tau}\E[\|x_i^{\tau}-\bx^{\tau}\|]< \infty
    \end{equation*}
    for all $i\in\until{N}$.
\end{lemma}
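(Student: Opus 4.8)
The plan is to substitute the bound from Lemma~\ref{lemma:xi-bx} into the weighted sum and show that each of the resulting contributions is summable. First I would isolate the $\tau=0$ term: by~\eqref{eq:x0} we have $a_m^0\E[\|x_i^0-\bx^0\|]\le a_m^0 C$, a finite constant. For $\tau\ge 1$, multiplying the right-hand side of~\eqref{eq:xi-bx} by $a_m^\tau$ and summing splits the task into three pieces,
\begin{equation*}
 MBC\sum_{\tau\ge 1} a_m^\tau(\mu_M)^{\tau-1},\qquad \frac{MBG}{\sigma}\sum_{\tau\ge 1} a_m^\tau\sum_{s=0}^{\tau-2}(\mu_M)^{\tau-s-2}a_M^s,\qquad \frac{MBG}{\sigma}\sum_{\tau\ge 1} a_m^\tau a_M^{\tau-1},
\end{equation*}
and it suffices to prove that each of the three is finite.

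The first and third pieces are straightforward. For the first, $a_m^\tau\le a_m^0$ (since $\{a_m^t\}$ is nonincreasing) and $\sum_{\tau\ge 1}(\mu_M)^{\tau-1}<\infty$ because $\mu_M\in(0,1)$, so the sum converges. For the third, I would use the monotonicity chain $a_m^\tau\le a_M^\tau\le a_M^{\tau-1}$ (combining $a_m^t\le a_M^t$ with the fact that $\{a_M^t\}$ is nonincreasing) to obtain $a_m^\tau a_M^{\tau-1}\le (a_M^{\tau-1})^2$, whose sum is finite by the square-summability $\sum_t (a_M^t)^2<\infty$ recorded after Assumption~\ref{assumption:stepsize}.

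The second (convolution) piece is the crux of the argument. The idea is again to trade the product of two distinct stepsizes for a single squared stepsize: since $s\le\tau-2\le\tau$ and the sequences are nonincreasing with $a_m^t\le a_M^t$, we have $a_m^\tau\le a_M^\tau\le a_M^s$, hence $a_m^\tau a_M^s\le (a_M^s)^2$. Setting $\gamma^s\triangleq (a_M^s)^2$, which is nonnegative and satisfies $\sum_{s}\gamma^s<\infty$, the inner double sum is dominated by $\sum_{\tau\ge 2}\sum_{s=0}^{\tau-2}(\mu_M)^{\tau-s-2}\gamma^s$; re-indexing with $t'=\tau-2\ge 0$ turns this into $\sum_{t'\ge 0}\sum_{s=0}^{t'}(\mu_M)^{t'-s}\gamma^s$, which is exactly the form treated by Lemma~\ref{lemma:series_nedic}\ref{item:2} with $\beta=\mu_M$, giving finiteness.

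I expect the main obstacle to be precisely this convolution term, since naively each of the weights $a_m^\tau$ and $a_M^s$ is only $\ell^1$-summable when squared, not on its own; the monotonicity bound $a_m^\tau a_M^s\le (a_M^s)^2$ is what makes the square-summability hypothesis usable, after which Lemma~\ref{lemma:series_nedic}\ref{item:2} closes the argument and the three finite pieces together yield the claim.
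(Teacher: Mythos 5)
Your proof is correct and follows essentially the same route as the paper's: isolate the $\tau=0$ term via~\eqref{eq:x0}, substitute the bound of Lemma~\ref{lemma:xi-bx}, use monotonicity of the stepsizes to dominate $a_m^\tau a_M^s$ by $(a_M^s)^2$ (and $a_m^\tau a_M^{\tau-1}$ by $(a_M^{\tau-1})^2$), and close the convolution term with part (ii) of Lemma~\ref{lemma:series_nedic}. The only slip is notational: the label you cite as \emph{(ii)} of that lemma actually belongs to Lemma~\ref{lemma:series}, so the citation should simply read ``Lemma~\ref{lemma:series_nedic}, part (ii)''; the mathematics is unaffected.
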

\begin{proof}
    As for $\tau=0$, from~\eqref{eq:x0} we have $a_m^{0}\E[\|x_i^{0}-\bx^0\|]\leq a_m^0 C$, while, for $\tau\geq 1$, from Lemma~\ref{lemma:xi-bx} it holds that
    \begin{align*}
         a_m^{\tau}\E[\|x_i^{\tau}-\bx^{\tau}\|] &\leq a_m^{\tau} MB\Bigg((\mu_M)^{\tau-1}C + \frac{G}{\sigma}\sum_{s=0}^{\tau-2}(\mu_{M})^{\tau-s-2} a_M^s+\frac{G}{\sigma} a_M^{\tau-1}\Bigg).
    \end{align*}
    Since, by Assumption~\ref{assumption:stepsize}, $ a_m^{\tau+1}\leq a_m^\tau\leq a_M^\tau$ for all $\tau$, one has
    \begin{align*}
         a_m^{\tau}\E[\|x_i^{\tau}-\bx^{\tau}\|]\leq  MB\Bigg( a_m^{\tau}(\mu_M)^{\tau-1}C+ \frac{G}{\sigma}\sum_{s=0}^{\tau-2}(\mu_{M})^{\tau-s-2}(a_M^s)^2+\frac{G}{\sigma}( a_M^{\tau-1})^2\Bigg)
    \end{align*}
    and then,
    \begin{align*}
        \sum_{\tau=0}^t a_m^{\tau}\E[\|x_i^\tau-\bx^\tau\|]\leq a_m^0 C + MB\Bigg(C\sum_{\tau=1}^t (\mu_M)^{\tau-1} a_m^{\tau}+\frac{G}{\sigma}\sum_{\tau=1}^t\sum_{s=0}^{\tau-2}\mu_{M}^{\tau-s-2}(a_M^s)^2+ \frac{G}{\sigma}\sum_{\tau=1}^t (a_M^{\tau-1})^2\Bigg).
    \end{align*}
    Now, from Assumption~\ref{assumption:random_variables}\ref{assumption:initial}, $C<\infty$. Moreover, by using Assumption~\ref{assumption:stepsize} and Lemma~\ref{lemma:series_nedic}, we have $\lim_{t\to\infty}\sum_{\tau=1}^t (\mu_M)^{\tau-1} a_m^\tau<\infty$. 
    Finally, by Assumption~\ref{assumption:stepsize}, $\sum_{\tau=0}^\infty( a_M^\tau)^2<\infty$, so that, from Lemma~\ref{lemma:series_nedic}, $\lim_{t\to\infty}\sum_{\tau=1}^t\sum_{s=0}^{\tau-2}(\mu_{M})^{\tau-s-2}( a_M^s)^2{<}\infty$, thus concluding the proof.
\end{proof}
As in the case of constant stepsizes, thanks to Lemma~\ref{lemma:yx}, it can be shown that $\lim_{t\to\infty} \E[\|y_i^t-x_i^t\|]=0$ and $\lim_{t\to\infty}\sum_{\tau=0}^t a_m^{\tau}\E[\|y_i^{\tau}-x_i^\tau\|]< \infty$.

\subsection{Optimality}
In this section, we show the convergence of the \DBPnospace.
First, a bound on the expected distance from the optimal cost at iteration $t$ is given without any assumption on the stepsize sequence. Then, it is shown that such a distance goes to $0$ as $t\to\infty$ for diminishing stepsizes, while it is upper bounded by a finite quantity for constant stepsizes and an explicit convergence rate is provided.

We start by defining the Ljapunov function
\begin{equation}\label{eq:V}
    V_i^\tau \triangleq \sum_{\ell=1}^B \pi_{i,\ell}^{-1}\nu_{\ell}(x_{i,\ell}^{\tau},x_{\ell}^\star)
\end{equation}
and $V^t \triangleq \sum_{i=1}^N V_i^t$.
Moreover, given a sequence of points $\{z^\tau\}_{\tau=0}^t$, we define
\begin{equation}
    \ff(z^t)\triangleq\min_{\tau\leq t} \E[f(z^\tau)]
\end{equation}
Then, the following result holds true.
\begin{theorem}\label{theorem:bound}
    Let Assumptions~\ref{assumption:problem_structure},~\ref{assumption:communication},~\ref{assumption:separate},~\ref{assumption:random_variables} hold. 
    Then,
    \begin{align}
        \ff(\bx^t)-f(x^\star)&\leq\left(\sum_{\tau=0}^t a_m^{\tau}\right)^{-1}\Bigg(\E[V^0]+\sum_{\tau=0}^t\frac{( a_M^{\tau})^2 \bG}{2\sigma}\nonumber\\
        & \hspace{3ex} + \sum_{\tau=0}^t a_m^{\tau} \sum_{i=1}^N G_i\E[\|y_i^{\tau}-x_i^{\tau}\|]\nonumber\\
        & \hspace{3ex} + \sum_{\tau=0}^t a_m^{\tau} \sum_{i=1}^N G_i\E[\|x_i^{\tau}-\bar{x}^{\tau}\|]\Bigg).\label{eq:f_bound}
    \end{align}
\end{theorem}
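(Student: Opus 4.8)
The plan is to build a one-step descent inequality for the weighted Lyapunov function $V^\tau=\sum_{i=1}^N\sum_{\ell=1}^B\pi_{i,\ell}^{-1}\nu_\ell(x_{i,\ell}^\tau,x_\ell^\star)$ and then telescope it. First I would reuse the first-order optimality condition~\eqref{eq:ei} for the proximal update of the active block $\ell=\ell_i^\tau$ (when $s_i^\tau=1$). Taking $u=x_\ell^\star$ and applying the three-point identity for Bregman divergences, $\nu_\ell(a,c)-\nu_\ell(b,c)-\nu_\ell(a,b)=\langle\nabla\omega_\ell(b)-\nabla\omega_\ell(a),c-b\rangle$, with $a=y_{i,\ell}^\tau$, $b=x_{i,\ell}^{\tau+1}$, $c=x_\ell^\star$, yields
\[
\nu_\ell(x_{i,\ell}^{\tau+1},x_\ell^\star)\leq \nu_\ell(y_{i,\ell}^\tau,x_\ell^\star)-\nu_\ell(y_{i,\ell}^\tau,x_{i,\ell}^{\tau+1})+\alpha_i^\tau\langle g_{i,\ell}(y_i^\tau;\xi_i^\tau),x_\ell^\star-x_{i,\ell}^{\tau+1}\rangle.
\]
Splitting the inner product at $y_{i,\ell}^\tau$, using $\nu_\ell(y_{i,\ell}^\tau,x_{i,\ell}^{\tau+1})\geq\frac{\sigma_\ell}{2}\|y_{i,\ell}^\tau-x_{i,\ell}^{\tau+1}\|^2$ and completing the square (exactly as in Lemma~\ref{lemma:prox_bound}) absorbs the term $\langle g_{i,\ell},y_{i,\ell}^\tau-x_{i,\ell}^{\tau+1}\rangle$ into $\frac{(\alpha_i^\tau)^2}{2\sigma}\|g_{i,\ell}\|^2$, leaving a per-block bound in terms of $\langle g_{i,\ell},x_\ell^\star-y_{i,\ell}^\tau\rangle$.

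Next I would take the conditional expectation given $\mS^\tau$. The weights $\pi_{i,\ell}^{-1}$ in $V_i^\tau$ are chosen precisely so that the probability $\pi_{i,\ell}$ that block $\ell$ is active cancels: for each block, $\pi_{i,\ell}^{-1}\E[\nu_\ell(x_{i,\ell}^{\tau+1},x_\ell^\star)\mid\mS^\tau]$ equals the descent bound for the active block plus $(\pi_{i,\ell}^{-1}-1)\nu_\ell(x_{i,\ell}^\tau,x_\ell^\star)$ coming from the event that the block is left unchanged. Summing over $\ell$, the single-block update is thereby turned into a full descent: the per-block inner products recombine into $\langle g_i(y_i^\tau;\xi_i^\tau),x^\star-y_i^\tau\rangle$ (this is where it matters that we inserted the $\ell$-th block of a \emph{whole} subgradient), and taking the further expectation over $\xi_i^\tau$ with the unbiasedness and convexity of $h_i$ gives $\E[\langle g_i,x^\star-y_i^\tau\rangle\mid\mS^\tau]\leq f_i(x^\star)-f_i(y_i^\tau)$, while $\E[\|g_i\|^2]\leq\bG_i$ controls the quadratic term.

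The key structural step is the cancellation of the remaining Bregman terms. Since $y_i^\tau=\sum_j w_{ij}x_j^\tau$ is a convex combination (as $\sum_j w_{ij}=1$), Assumption~\ref{assumption:separate} gives $\sum_\ell\nu_\ell(y_{i,\ell}^\tau,x_\ell^\star)\leq\sum_j w_{ij}\sum_\ell\nu_\ell(x_{j,\ell}^\tau,x_\ell^\star)$. Summing the per-agent recursion over $i$ and invoking \emph{column} stochasticity ($\sum_i w_{ij}=1$) makes these cross-agent (unweighted) divergence terms telescope against $\sum_i\sum_\ell\nu_\ell(x_{i,\ell}^\tau,x_\ell^\star)$ and cancel exactly, leaving the clean recursion $\E[V^{\tau+1}\mid\mS^\tau]\leq V^\tau-\sum_i\alpha_i^\tau(f_i(y_i^\tau)-f_i(x^\star))+\sum_i\frac{(\alpha_i^\tau)^2\bG_i}{2\sigma}$. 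Double stochasticity of $\WW$ is thus the crux that decouples the consensus error from the Lyapunov descent.

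Finally I would take total expectation (tower property, Lemma~\ref{lemma:expectation}), sum over $\tau=0,\dots,t$, drop $\E[V^{t+1}]\geq 0$, and bound $\sum_i(\alpha_i^\tau)^2\bG_i\leq(a_M^\tau)^2\bG$. To recover $f(\bx^\tau)-f(x^\star)$ from $\sum_i\alpha_i^\tau(f_i(y_i^\tau)-f_i(x^\star))$ I would move from each $y_i^\tau$ to the average $\bx^\tau$ through $x_i^\tau$, using $G_i$-Lipschitzness of $f_i$ (from $\|\gs_i\|\leq G_i$) to produce exactly the two error families $\|y_i^\tau-x_i^\tau\|$ and $\|x_i^\tau-\bx^\tau\|$, and lower-bounding the positive descent by $a_m^\tau\leq\alpha_i^\tau$. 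Normalizing by $\sum_{\tau=0}^t a_m^\tau$ and bounding the minimum of $\{\E[f(\bx^\tau)]-f(x^\star)\}_{\tau\leq t}$ by their $a_m^\tau$-weighted average yields~\eqref{eq:f_bound}. I expect the main obstacle to be this last reconstruction: because the stepsizes $\alpha_i^\tau$ are heterogeneous and the per-agent gaps $f_i(\bx^\tau)-f_i(x^\star)$ are sign-indefinite, replacing $\alpha_i^\tau$ by the uniform $a_m^\tau$ while keeping only the stated error terms (and $a_M^\tau$ only in the quadratic term) requires carefully routing the Lipschitz bounds so that the heterogeneity is charged entirely to the consensus-distance terms.
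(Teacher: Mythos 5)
Your proposal is correct and follows essentially the same route as the paper's proof: the same $\pi_{i,\ell}^{-1}$-weighted Lyapunov function, the same per-block proximal optimality inequality (the paper states \eqref{eq:optimality} directly where you derive it via the three-point identity), the same probability cancellation under conditioning on $\mS^\tau$, the same combination of separate convexity and double stochasticity of $\WW$ to cancel the Bregman terms, and the same final reconstruction through the $G_i$-Lipschitz error terms. The last obstacle you flag is handled in the paper exactly as you suggest: the substitution $\sum_{\tau}a_m^\tau\sum_i\left(\E[f_i(y_i^\tau)]-f_i(x^\star)\right)\leq\sum_\tau\sum_i\alpha_i^\tau\left(\E[f_i(y_i^\tau)]-f_i(x^\star)\right)$ in \eqref{eq:aa} replaces $\alpha_i^\tau$ by $a_m^\tau$ only on the descent terms $f_i(y_i^\tau)-f_i(x^\star)$, while the consensus-distance terms enter already weighted by $a_m^\tau$ via \eqref{eq:xstar}--\eqref{eq:x_i-xstar}.
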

\begin{proof}
    In order to simplify the notation, let us denote $\gs_i^\tau=\gs_i(y_i^{\tau})$ and $g_{i,\ell_i^\tau}=g_{i,\ell_i^\tau}(y_i^{\tau},\xi_i^\tau)$.
    From the convexity of $f$ we have that, at a given iteration $t$,
    \begin{align}
        \left(\sum_{\tau=0}^t a_m^{\tau}\right)(\ff(\bx^t)-f(x^\star)) &=\left(\sum_{\tau=0}^t a_m^{\tau}\right)(\min_{\tau\leq t} (\E[f(\bar{x}^\tau)]-f(x^\star))\nonumber\\
        &\leq \sum_{\tau=0}^{t}a_m^{\tau}\left( \E[f(\bar{x}^{\tau})]-f(x^\star) \right).\label{eq:xstar}
    \end{align}
Now, we make some manipulation on the term $\E[f(\bar{x}^{\tau})]-f(x^\star)=\E[f(\bx^{\tau})-f(x^\star)]$:
\begin{align}
    \E[f(\bar{x}^{\tau})-f(x^\star)]&=\sum_{i=1}^N \E[\left(f_i(\bar{x}^{\tau})-f_i(x^\star)\right)]\nonumber\\
    &=\sum_{i=1}^N\E[\left(f_i(x_i^{\tau})-f_i(x^\star)+f_i(\bar{x}^{\tau})-f_i(x_i^{\tau})\right)] \nonumber\\
    &\leq\sum_{i=1}^N \E[\left(f_i(x_i^{\tau})-f_i(x^\star)\right)] +\sum_{i=1}^N G_i\E[\|x_i^{\tau}-\bar{x}^{\tau}\|] \nonumber\\
    &=\sum_{i=1}^N \E[\left(f_i(y_i^{\tau})-f_i(x^\star)+f_i(x_i^{\tau})-f_i(y_i^{\tau})\right)] +\sum_{i=1}^N G_i\E[\|x_i^{\tau}-\bar{x}^{\tau}\|] \nonumber\\
    &\leq\sum_{i=1}^N \E[\left(f_i(y_i^{\tau})-f_i(x^\star)\right)] {+\sum_{i=1}^N G_i\left(\E[\|y_i^{\tau}-x_i^{\tau}\|]+\E[\|x_i^{\tau}-\bar{x}^{\tau}\|]\right) },\label{eq:x_i-xstar}
\end{align}
where we have used multiple times the convexity of $f$ (and of each $f_j$) and the subgradient boundedness (Assumption~\ref{assumption:problem_structure}\ref{assumption:subgradients}).
Let us now study the term $\sum_{i=1}^N \E[f_i(y_i^{\tau})-f_i(x^\star)]$ in~\eqref{eq:x_i-xstar}.
By writing the optimality condition for the proximal mapping in the update~\eqref{eq:x_update}, if $s_i^\tau=1$, one has
\begin{align}
    \nu_{\ell_i^\tau}(x_{i,\ell_i^\tau}^{\tau+1},x_{\ell_i^\tau}^\star)&\leq \nu_{\ell_i^\tau}(y_{i,\ell_i^\tau}^{\tau},x_{\ell_i^\tau}^\star)-\alpha_i^\tau\langle U_{\ell_i^{\tau}}g_i^{\tau},y_i^{\tau}-x^\star\rangle+\frac{(\alpha_i^\tau)^2}{2\sigma} \|g_{i,\ell_i^\tau}\|^2.\label{eq:optimality}
\end{align}
Hence, 
\begin{equation}
    \nu_{\ell}(x_{i,\ell}^{\tau+1},x_{\ell}^\star)\leq
    \begin{cases} 
        \eqref{eq:optimality},&\text{if }\ell=\ell_i^\tau,\text{ and } s_i^\tau =1\\
        \nu_{\ell}(x_{i,\ell}^{\tau},x_{\ell}^\star),&\text{otherwise}.
    \end{cases}\label{eq:tx_update}
\end{equation}
Thus, from~\eqref{eq:tx_update} and by using~\eqref{eq:V}, one has that, if $s_i^\tau=1$, it holds
\begin{align}
    V_i^{\tau+1} &\leq \sum_{m\neq\ell_i^\tau} \pi_{i,m}^{-1}\nu_{m}(x_{i,m}^{\tau},x_{m}^\star) + \pi_{i,\ell_i^\tau}^{-1}\Bigg(\nu_{\ell_i^\tau}(y_{i,\ell_i^\tau}^{\tau},x_{\ell_i^\tau}^\star) -\alpha_i^\tau\langle U_{\ell_i^{\tau}}g_i^{\tau},y_i^{\tau}-x^\star\rangle+\frac{(\alpha_i^\tau)^2}{2\sigma} \|g_{i,\ell_i^\tau}\|^2\Bigg),\label{eq:von}
\end{align}
while, if $s_i^\tau=0$,
\begin{equation}
    V_i^{\tau+1} = V_i^{\tau} = \sum_{\ell=1}^B \pi_{i,\ell}^{-1}\nu_{\ell}(x_{i,\ell}^{\tau},x_{\ell}^\star).\label{eq:voff}
\end{equation}
Now, by taking the expected value of $V_i^{\tau+1}$ conditioned to $\mS^{\tau}$, one obtains
\begin{align}
    \E[V_i^{\tau+1}\mid\mS^{\tau}]&=(1-p_{i,on})\E[V_i^{\tau+1}\mid\mS^{\tau}, s_i^\tau=0]+p_{i,on}\E[V_i^{\tau+1}\mid\mS^{\tau}, s_i^\tau=1],\label{eq:EV}
\end{align}
and hence, by substituting~\eqref{eq:von} and~\eqref{eq:voff} in~\eqref{eq:EV},
\begin{align}
    \E[V_i^{\tau+1}\mid\mS^{\tau}]
    &\leq(1-p_{i,on})\sum_{\ell=1}^B \pi_{i,\ell}^{-1}\nu_{\ell}(x_{i,\ell}^{\tau},x_{\ell}^\star) \nonumber\\
    &\hspace{3ex} +p_{i,on}\sum_{\ell=1}^B p_{i,\ell} \Bigg(\sum_{m\neq\ell}\pi_{i,m}^{-1}\nu_{m}(x_{i,m}^{\tau},x_{m}^\star) \nonumber\\
    &\hspace{3ex} +  \pi_{i,\ell}^{-1}\nu_{\ell}(y_{i,\ell}^{\tau},x_{\ell}^\star)-\alpha_i^\tau \pi_{i,\ell}^{-1} \E[\langle U_{\ell}g_i^\tau,y_i^{\tau}-x^\star\rangle]\nonumber\\
    &\hspace{3ex} +\frac{(\alpha_i^\tau)^2}{2\sigma} \pi_{i,\ell}^{-1} \E[\|g_{i,\ell}\|^2] \Bigg).\label{eq:vi1}
\end{align}
Notice now that
\begin{align}
    \sum_{\ell=1}^B p_{i,\ell} \sum_{m\neq\ell}\pi_{i,m}^{-1}\nu_{m}(x_{i,m}^{\tau},x_{m}^\star)&=\sum_{\ell=1}^B \left(p_{i,\ell}\sum_{m=1}^B\pi_{i,m}^{-1}\nu_{m}(x_{i,m}^{\tau},x_{m}^\star) - p_{i,\ell} \pi_{i,\ell}^{-1}\nu_{\ell}(x_{i,\ell}^{\tau},x_{\ell}^\star)\right)\nonumber\\
    &=\sum_{m=1}^B \sum_{\ell=1}^B p_{i,\ell}\pi_{i,m}^{-1}\nu_{m}(x_{i,m}^{\tau},x_{m}^\star)- \sum_{\ell=1}^Bp_{i,\ell} \pi_{i,\ell}^{-1}\nu_{\ell}(x_{i,\ell}^{\tau},x_{\ell}^\star)\nonumber\\
    &= \sum_{m=1}^B \pi_{i,m}^{-1}\nu_{m}(x_{i,m}^{\tau},x_{m}^\star)- \sum_{\ell=1}^Bp_{i,\ell} \pi_{i,\ell}^{-1}\nu_{\ell}(x_{i,\ell}^{\tau},x_{\ell}^\star)\nonumber\\
    &=\sum_{\ell=1}^B (1-p_{i,\ell}) \pi_{i,\ell}^{-1}\nu_{\ell}(x_{i,\ell}^{\tau},x_{\ell}^\star).\label{eq:pil}
\end{align}
Moreover, by noting that it holds that $\sum_{\ell=1}^B \E[\|g_{i,\ell}^\tau\|^2] =  \E[\sum_{\ell=1}^B\|g_{i,\ell}^\tau\|^2]=\E[\|g_i^\tau\|^2]$ and $\sum_{\ell=1}^B U_\ell g_i^\tau = g_i^\tau$ and by substituting~\eqref{eq:pil} in~\eqref{eq:vi1}, we obtain
\begin{align}
    \E[V_i^{\tau+1}\mid\mS^{\tau}]
    &= \sum_{\ell=1}^B\left((1-p_{i,on})+p_{i,on}-p_{i,on}p_{i,\ell}\right)\pi_{i,\ell}^{-1}\nu_{\ell}(x_{i,\ell}^{\tau},x_{\ell}^\star)\nonumber\\
    &\hspace{3ex} + \sum_{\ell=1}^B\nu_{\ell}(y_{i,\ell}^{\tau},x_{\ell}^\star)-\alpha_i^\tau  \E[\langle g_i^\tau,y_i^{\tau}-x^\star\rangle]+\frac{(\alpha_i^\tau)^2}{2\sigma} \E[\|g_i^\tau\|^2] \nonumber\\
    &= \sum_{\ell=1}^B(1-\pi_{i,\ell})\pi_{i,\ell}^{-1}\nu_{\ell}(x_{i,\ell}^{\tau},x_{\ell}^\star)+ \sum_{\ell=1}^B\nu_{\ell}(y_{i,\ell}^{\tau},x_{\ell}^\star)\nonumber\\
    &\hspace{3ex} -\alpha_i^\tau  \E[\langle g_i^\tau,y_i^{\tau}-x^\star\rangle]+\frac{(\alpha_i^\tau)^2}{2\sigma} \E[\|g_i^\tau\|^2] \nonumber\\
    &= V_i^\tau - \sum_{\ell=1}^B\nu_{\ell}(x_{i,\ell}^{\tau},x_{\ell}^\star) + \sum_{\ell=1}^B\nu_{\ell}(y_{i,\ell}^{\tau},x_{\ell}^\star)\nonumber\\
    &\hspace{3ex}  -\alpha_i^\tau  \langle \E[g_i^\tau],y_i^{\tau}-x^\star\rangle+\frac{(\alpha_i^\tau)^2}{2\sigma} \E[\|g_i^\tau\|^2] \nonumber\\
    &\leq V_i^{\tau} - \sum_{\ell=1}^B\nu_{\ell}(x_{i,\ell}^{\tau},x_{\ell}^\star) +
    \sum_{j=1}^N w_{ij}\sum_{\ell=1}^B\nu_{\ell}(x_{j,\ell}^{\tau},x_{\ell}^\star)\nonumber\\
    &\hspace{3ex}  -\alpha_i^\tau  \langle \gs_i^\tau,y_i^{\tau}-x^\star\rangle+\frac{(\alpha_i^\tau)^2 \bG_i}{2\sigma}
    \label{eq:start2}
\end{align} 
where in the last inequality we used the separate convexity of $\nu_{\ell}$ from Assumptions~\ref{assumption:separate} and the fact that $\E[\|g_i^\tau\|^2]\leq\bG_i$ (Assumption~\ref{assumption:problem_structure}\ref{assumption:subgradients}), and $\E[g_i^\tau]=\gs_i^\tau$ (Assumption~\ref{assumption:problem_structure}\ref{assumption:unbiased}).
Now, by summing over $i$, 
\vspace{-1ex}
\begin{align}
    \sum_{i=1}^N \E[V_i^{\tau+1}\mid\mS^{\tau}]
    &\leq \sum_{i=1}^N V_i^{\tau} - \sum_{i=1}^N \sum_{\ell=1}^B\nu_{\ell}(x_{i,\ell}^{\tau},x_{\ell}^\star) + \sum_{i=1}^N\sum_{j=1}^N w_{ij}\sum_{\ell=1}^B\nu_{\ell}(x_{j,\ell}^{\tau},x_{\ell}^\star)\nonumber\\
    &\hspace{3ex}-\sum_{i=1}^N\alpha_i^\tau \langle \gs_i^\tau,y_i^{\tau}-x^\star\rangle+\sum_{i=1}^N\frac{(\alpha_i^\tau)^2 \bG_i}{2\sigma}\nonumber\\
    &= \sum_{i=1}^N V_i^{\tau} - \sum_{i=1}^N \sum_{\ell=1}^B\nu_{\ell}(x_{i,\ell}^{\tau},x_{\ell}^\star) + \sum_{j=1}^N \sum_{\ell=1}^B\nu_{\ell}(x_{j,\ell}^{\tau},x_{\ell}^\star)\nonumber\\
    &\hspace{3ex}-\sum_{i=1}^N\alpha_i^\tau \langle \gs_i^\tau,y_i^{\tau}-x^\star\rangle+\sum_{i=1}^N\frac{(\alpha_i^\tau)^2 \bG_i}{2\sigma}\nonumber\\
    &= \sum_{i=1}^N V_i^{\tau} -\sum_{i=1}^N\alpha_i^\tau \langle \gs_i^\tau,y_i^{\tau}-x^\star\rangle+\sum_{i=1}^N\frac{(\alpha_i^\tau)^2 \bG_i}{2\sigma}\nonumber\\
    &\leq \sum_{i=1}^N V_i^{\tau} -\sum_{i=1}^N\alpha_i^\tau \left(f_i(y_i^{\tau})-f_i(x^\star)\right)+\frac{(a_M^\tau)^2 \bG}{2\sigma},\label{eq:sumV}
\end{align}
where in the last inequality we used the convexity of $f_i$. 
Taking the expected value conditioned to $\mS^{0}$ on both sides of~\eqref{eq:sumV}, using $\E[V_i^{\tau+1}{\mid}\mS^{\tau}]{=}\E[V_i^{\tau+1}{\mid}\mS^{\tau},\mS^0]$ and  Lemma~\ref{lemma:expectation}, gives
\begin{align*}
    \sum_{i=1}^N \E\left[\E[V_i^{\tau+1}\mid\mS^{\tau}]\mid\mS^{0}\right]&=\sum_{i=1}^N \E[V_i^{\tau+1}\mid\mS^{0}]\nonumber\\
    &\leq \sum_{i=1}^N\E[V_i^{\tau}\mid\mS^{0}]-\sum_{i=1}^N\alpha_i^\tau \left(\E[f_i(y_i^{\tau})\mid\mS^{0}]-f_i(x^\star)\right) +\frac{(a_M^\tau)^2 \bG}{2\sigma}.
\end{align*}
and, rearranging the terms,
\begin{align*}
    \sum_{i=1}^N&\alpha_i^\tau\left(\E[f_i(y_i^{\tau})\mid\mS^0]-f_i(x^\star)\right)\leq\sum_{i=1}^N\E[V_i^{\tau}\mid\mS^0]-\sum_{i=1}^N \E[V_i^{\tau+1}\mid\mS^0]+\frac{(a_M^\tau)^2 \bG}{2\sigma}.
\end{align*}
Now, by summing over $\tau$, and noting that $\E[V_i^0\mid\mS^0]=V_i^0$,
\begin{align*}
    \sum_{\tau=0}^{t} \sum_{i=1}^N \alpha_i^{\tau} \left(\E[f_i(y_i^{\tau})\mid\mS^0]-f_i(x^\star)\right)&\leq\sum_{i=1}^NV_i^0-\sum_{i=1}^N \E[V_i^{t+1}\mid\mS^0]+\sum_{\tau=0}^t\frac{(a_M^\tau)^2 \bG}{2\sigma}\nonumber\\
    &\leq V^{0}+\sum_{\tau=0}^t\frac{(a_M^\tau)^2 \bG}{2\sigma}.%\label{eq:ys}
\end{align*}
Moreover, by taking the expected value over $\mS^0=\{\x^0\}$,
\begin{align}
    \sum_{\tau=0}^{t} \sum_{i=1}^N \alpha_i^{\tau} \left(\E[f_i(y_i^{\tau})]-f_i(x^\star)\right)
    &\leq \E[V^{0}]+\sum_{\tau=0}^t\frac{(a_M^\tau)^2 \bG}{2\sigma}.\label{eq:ys}
\end{align}
Notice now that, since by definition $a_m^\tau\leq \alpha_i^\tau$ for all $i$ and $\tau$, we have
\begin{align}
    \sum_{\tau=0}^{t} a_m^{t}& \sum_{i=1}^N \left(\E[f_i(y_i^{\tau})]-f_i(x^\star)\right)\leq \sum_{\tau=0}^{t} \sum_{i=1}^N \alpha_i^{\tau} \left(\E[f_i(y_i^{\tau})]-f_i(x^\star)\right).\label{eq:aa}
\end{align}
Finally, by combining~\eqref{eq:xstar},~\eqref{eq:x_i-xstar},~\eqref{eq:ys} and~\eqref{eq:aa} one obtains~\eqref{eq:f_bound}.
\end{proof}
A similar result can be given also in terms of the sequences of local solution estimates $\{x_i^t\}$ as formalized in the next result.
 \begin{corollary}\label{corollary:bound}
     Let Assumptions~\ref{assumption:problem_structure},~\ref{assumption:communication},~\ref{assumption:separate},~\ref{assumption:random_variables} hold. 
     Then,
     \begin{align}
       \ff(x_i^t)-f(x^\star)&\leq\left(\sum_{\tau=0}^t a_m^{\tau}\right)^{-1}\Bigg(\E[V^0]+\sum_{\tau=0}^t\frac{( a_M^{\tau})^2 \bG}{2\sigma}\nonumber\\
        & \hspace{3ex} + \sum_{\tau=0}^t a_m^{\tau} \sum_{j=1}^N G_j\E[\|y_j^{\tau}-x_j^{\tau}\|]\nonumber\\
        & \hspace{3ex} + \sum_{\tau=0}^t a_m^{\tau} \sum_{j=1}^N G_j\E[\|x_j^{\tau}-\bar{x}^{\tau}\|]\nonumber\\
        & \hspace{3ex} + \sum_{\tau=0}^t a_m^{\tau} G \E[\|x_i^{\tau}-\bar{x}^{\tau}\|]\Bigg).\label{eq:f_i_bound}
     \end{align}
     for all $i\in\until{N}$.
 \end{corollary}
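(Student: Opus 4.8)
The plan is to \emph{derive the corollary directly from Theorem~\ref{theorem:bound}}, treating the local estimate $x_i^\tau$ as a perturbation of the average $\bx^\tau$ whose cost mismatch is controlled by the consensus error $\E[\|x_i^\tau-\bx^\tau\|]$. Concretely, every occurrence of $\bx^\tau$ in the argument of $f$ will be swapped for $x_i^\tau$, and the resulting discrepancy will be absorbed into an additional term of exactly the form appearing in the last line of~\eqref{eq:f_i_bound}.

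First I would mimic the opening step of the theorem's proof. Since $\ff(x_i^t)=\min_{\tau\leq t}\E[f(x_i^\tau)]$ and $a_m^\tau\geq0$, the running minimum is dominated by any $a_m^\tau$-weighted average, giving
\begin{equation*}
    \Big(\sum_{\tau=0}^t a_m^\tau\Big)\big(\ff(x_i^t)-f(x^\star)\big)\leq \sum_{\tau=0}^t a_m^\tau\big(\E[f(x_i^\tau)]-f(x^\star)\big).
\end{equation*}
Next, using the convexity of $f$ together with the global subgradient bound $\|\gs(x)\|\leq G$ (a consequence of Assumption~\ref{assumption:problem_structure}\ref{assumption:subgradients}), I would bound, for each $\tau$,
\begin{equation*}
    \E[f(x_i^\tau)]\leq \E[f(\bx^\tau)]+G\,\E[\|x_i^\tau-\bx^\tau\|].
\end{equation*}
After multiplying by $a_m^\tau$ and summing, this produces precisely the extra term $\sum_{\tau=0}^t a_m^\tau\, G\,\E[\|x_i^\tau-\bx^\tau\|]$ that distinguishes~\eqref{eq:f_i_bound} from~\eqref{eq:f_bound}.

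It then remains to bound $\sum_{\tau=0}^t a_m^\tau(\E[f(\bx^\tau)]-f(x^\star))$, and this quantity is exactly the object already estimated inside the proof of Theorem~\ref{theorem:bound}. I would reuse that chain verbatim: apply the per-iteration decomposition~\eqref{eq:x_i-xstar} to pass from $\bx^\tau$ to the $y_i^\tau$ (paying the terms $\sum_j G_j(\E[\|y_j^\tau-x_j^\tau\|]+\E[\|x_j^\tau-\bx^\tau\|])$), then invoke~\eqref{eq:aa} and the Lyapunov recursion summarized in~\eqref{eq:ys} to bound $\sum_\tau\sum_i\alpha_i^\tau(\E[f_i(y_i^\tau)]-f_i(x^\star))$ by $\E[V^0]+\sum_\tau(a_M^\tau)^2\bG/(2\sigma)$. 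Collecting these bounds reproduces the first three groups of terms in~\eqref{eq:f_i_bound}, and dividing through by $\sum_{\tau=0}^t a_m^\tau$ yields the claim for every $i\in\until{N}$.

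The only point needing care is the passage from $f(x_i^\tau)$ back to $f(\bx^\tau)$: because $\ff$ is a running minimum, one cannot compare $\min_\tau\E[f(x_i^\tau)]$ with $\min_\tau\E[f(\bx^\tau)]$ term-by-term, since the minimizing indices may differ. Embedding the minimum inside the $a_m^\tau$-weighted sum \emph{before} applying convexity, exactly as in the theorem, is what makes the argument go through; thereafter the derivation is a direct repetition of the theorem's estimate augmented by the single extra consensus-error term.
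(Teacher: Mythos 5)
Your proposal is correct and follows essentially the same route as the paper's own proof: the paper likewise bounds $f(x_i^\tau)-f(x^\star)\leq f(\bx^\tau)-f(x^\star)+G\|x_i^\tau-\bx^\tau\|$ via convexity and the subgradient bound, inserts this inside the $a_m^\tau$-weighted sum in place of~\eqref{eq:xstar}, and then reuses the remainder of Theorem~\ref{theorem:bound}'s argument verbatim. Your closing remark about embedding the running minimum in the weighted sum before applying the term-wise bound is exactly the care the paper's proof implicitly takes.
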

 \begin{proof}
     The proof follows the same line of the one of Theorem~\ref{theorem:bound}, by noting that
     \begin{align*}
         f(x_i^t)-f(x^\star)&=f(x_i^t)-f(\bx^t)+f(\bx^t)-f(x^\star)\\
         &\leq f(\bx^t)-f(x^\star) + G\|x_i^t-\bx^t\|,
     \end{align*}
    so that in place of~\eqref{eq:xstar}, one has
     \begin{align*}
        \left(\sum_{\tau=0}^t a_m^{\tau}\right)(\ff(x_i^t)-f(x^\star)) &\leq \sum_{\tau=0}^{t}a_m^{\tau}\left( \E[f(x_i^{\tau})]-f(x^\star) \right)\nonumber\\
        &\leq \sum_{\tau=0}^{t}a_m^{\tau}\left( \E[f(\bar{x}^{\tau})]-f(x^\star) + G\E[\|x_i^\tau-\bx^\tau\|]\right)    \end{align*}
    for all $i\in\until{N}$.
 \end{proof}
The previous two results hold true without making any assumption on the local stepsize sequences.
In the next two subsections the general result of Theorem~\ref{theorem:bound} is specialized to the case of constant and diminishing stepsizes respectively. 
 
\subsubsection{Constant stepsizes}\label{subsec:rate}
In the case of constant (local) stepsizes, convergence with a constant error is attained with an explicit sublinear convergence rate.
\begin{theorem}\label{theorem:bound_constant}
    Let Assumptions~\ref{assumption:problem_structure},~\ref{assumption:communication},~\ref{assumption:separate},~\ref{assumption:constant},~\ref{assumption:random_variables} hold. 
    Then, there exist constants $M\in(0,\infty)$ and $\mu_M\in(0,1)$ such that
    \begin{align}
        \ff(x_i^t)-f(x^\star)\leq \frac{Q+(\mu_M)^t R}{t+1} +  S\label{eq:bound_constant}
    \end{align}
    for all $i\in\until{N}$, with 
    \begin{align*}
        Q&=\frac{\E[V^0]}{a_m}+ 4G\left(\frac{MBC}{1-\mu_M}+\bQ\right),\\
        R&= 4G\bR,\\
        S&= 4G\bS +\frac{a_M^2}{a_m}\frac{\bG}{2\sigma}.
    \end{align*}
\end{theorem}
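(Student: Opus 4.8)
The plan is to specialize the stepsize-agnostic master bound of Corollary~\ref{corollary:bound} to the constant-stepsize setting and then to replace the three consensus-error sums on its right-hand side by the explicit constant-stepsize estimates of Lemma~\ref{lemma:x_constant} and Lemma~\ref{lemma:yx}. Under Assumption~\ref{assumption:constant} every $a_m^\tau$ equals $a_m$ and every $a_M^\tau$ equals $a_M$, so the normalizing prefactor collapses to $\left(\sum_{\tau=0}^t a_m^\tau\right)^{-1}=\frac{1}{(t+1)a_m}$ and the deterministic stepsize term becomes $\sum_{\tau=0}^t\frac{(a_M^\tau)^2\bG}{2\sigma}=(t+1)\frac{a_M^2\bG}{2\sigma}$. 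After division by $(t+1)a_m$ the former turns the $\E[V^0]$ term into a contribution of order $1/(t+1)$, while the latter produces the non-vanishing offset $\frac{a_M^2}{a_m}\frac{\bG}{2\sigma}$, which will be the second half of $S$.

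Next I would dispose of the three consensus-error sums. Because $a_m^\tau=a_m$ is constant it factors out of each of them, and because $\sum_{j=1}^N G_j=G$ the three sums are all controlled by the single scalar bound of Lemma~\ref{lemma:x_constant}; the term carrying $\E[\|y_j^\tau-x_j^\tau\|]$ additionally picks up the factor $2$ from Lemma~\ref{lemma:yx} (equivalently, from the $y$-analogue stated just after Lemma~\ref{lemma:x_constant}). Adding the three weights $2+1+1$, the combined consensus contribution is at most $4a_mG\left((\mu_M)^t\bR+t\bS+\bQ+\frac{MBC}{1-\mu_M}\right)$, where I deliberately keep the geometric-series constant $\frac{MBC}{1-\mu_M}$ that enters the constant part of that sum bound.

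I would then collect terms by decay rate. Dividing the consensus contribution by $(t+1)a_m$ cancels $a_m$ and leaves $\frac{4G}{t+1}$ multiplying the bracketed quantity, which separates into three pieces: an exponentially decaying transient $\frac{4G\bR(\mu_M)^t}{t+1}$, fixing $R=4G\bR$; a linear-in-$t$ piece $\frac{4G\bS\,t}{t+1}\le 4G\bS$, which by the elementary bound $t/(t+1)\le 1$ is absorbed into the constant floor and, combined with $\frac{a_M^2}{a_m}\frac{\bG}{2\sigma}$, yields $S=4G\bS+\frac{a_M^2}{a_m}\frac{\bG}{2\sigma}$; and an order-$1/(t+1)$ piece $\frac{4G(\bQ+MBC/(1-\mu_M))}{t+1}$, which combines with $\frac{\E[V^0]}{(t+1)a_m}$ to give $Q=\frac{\E[V^0]}{a_m}+4G\left(\frac{MBC}{1-\mu_M}+\bQ\right)$. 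The constants $M\in(0,\infty)$ and $\mu_M\in(0,1)$ are inherited unchanged from the consensus analysis (Lemma~\ref{lemma:xi-bx} and its constant-stepsize specializations).

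I expect the only real difficulty to be bookkeeping rather than any sharp estimate: the three distinct asymptotic scales --- the $(\mu_M)^t/(t+1)$ transient, the $1/(t+1)$ transient, and the constant floor --- must be kept cleanly separated through the algebra. Two points require care: retaining the geometric-series constant $\frac{MBC}{1-\mu_M}$ so that it lands in $Q$ rather than being dropped, and intentionally over-estimating the linear-in-$t$ consensus term via $t/(t+1)\le 1$ so that it becomes part of the constant $S$ instead of an unbounded contribution. Once the closed forms of $\bR$, $\bS$ and $\bQ$ from Lemma~\ref{lemma:xi-bx_constant} and Lemma~\ref{lemma:x_constant} are substituted, the stated expressions for $Q$, $R$ and $S$ follow.
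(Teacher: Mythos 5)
Your proposal is correct and follows essentially the same route as the paper: specialize Corollary~\ref{corollary:bound} to constant stepsizes, bound the three consensus sums via Lemma~\ref{lemma:x_constant} and Lemma~\ref{lemma:yx} with combined weight $2+1+1=4$, divide by $(t+1)a_m$, and absorb the linear-in-$t$ piece into $S$ using $t/(t+1)\le 1$. In fact, your bookkeeping is slightly more careful than the paper's own: you explicitly carry the geometric-series constant $MBC/(1-\mu_M)$ through the consensus-sum bound (the statement of Lemma~\ref{lemma:x_constant} omits it from $\bQ$, yet it appears in the theorem's $Q$), thereby reconciling a small inconsistency between the paper's intermediate display and its final constants.
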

\begin{proof}
    By exploiting Assumption~\ref{assumption:constant}, Lemma~\ref{lemma:x_constant} and Lemma~\ref{lemma:yx}, from~\eqref{eq:f_i_bound} one obtains
    \begin{align*}
        \ff(x_i^t)-f(x^\star) &\leq\left(\sum_{\tau=0}^t a_m\right)^{-1}\Bigg(\E[V^0]+\sum_{\tau=0}^t\frac{( a_M)^2 \bG}{2\sigma}+ 4a_m G\bigg( (\mu_M)^t \bR + t \bS + \bQ\bigg)\Bigg)
    \end{align*}
    which, by rearranging the terms, leads to
    \begin{align*}
        \ff(x_i^t)-f(x^\star)&\leq \frac{Q+(\mu_M)^t R}{t+1}+  \frac{t}{t+1}4G\bS +\frac{a_M^2}{a_m}\frac{\bG}{2\sigma}\\
        &\leq \frac{Q+(\mu_M)^t R}{t+1} +  S
    \end{align*}
    thus concluding the proof.
\end{proof}

The previous result shows that, when constant stepsizes are employed, the value of $\ff(x_i^t)$ converges to $f(x^\star)$ plus a constant error, which can be retrieved from~\eqref{eq:bound_constant} by taking the limit for $t\to\infty$, i.e., $\lim_{t\to\infty} \ff(x_i^t)-f(x^\star) \leq S$ with the explicit expression for $S$ being
\begin{equation*}
	S = a_M\left(\frac{4MBG^2}{\sigma}\frac{2-\mu_M}{1-\mu_M} + \frac{a_M}{a_m}\frac{\bar{G}}{\sigma}\right).
\end{equation*}
It is worth noting that the bound decreases with the maximum stepsize $a_M$.
Regarding the convergence rate, it is sublinear $O\left(Q/t\right)$. 
However, in the first iterations, the term $(\mu_M)^t R$ can be dominant (if $|R|\gg Q$), thus leading to a linear rate at the beginning of the algorithm (as it will be shown in the numerical example). 
Notice that $Q$ (and hence the convergence rate) depends both on the number of blocks and on the local probabilities of being awake and drawing blocks. The local probabilities appear in $V^0$ and (implicitly) in the constant $\mu_M$. In fact, $\mu_M$ is related to the randomness of the matrices $W_\ell^t$, which depend on such probabilities. 
In particular notice that, if $B=1$ the rate is similar to those obtained in~\cite{nedic2009distributed}, in which the proximal mapping $\nu(a,b)=\frac{1}{2}\|a-b\|^2$ is used.
Clearly, one may argue that the best rate is achieved by using a single block and hence, communicating in terms of blocks is useless. This is true only if we assume an infinite bandwidth to be available in the communication channels (i.e, transmitting the entire optimization variable or a single block of it requires the same amount of time).
However, in typical real world scenarios this is not true and data that exceed the communication bandwidth are transmitted sequentially. 
If only one block fits the communication channel, our algorithm allows to perform an update at each communication round, while classical ones would need $B$ communication rounds per update.
Moreover, in the proposed algorithm, typically, the local computation time at each iteration is \emph{not} negligible, since a minimization problem is to be solved at every step (see~\eqref{eq:x_update_l}). Solving such an optimization problem on the entire optimization variable or on a single block of it clearly results in completely different computational times, which are clearly lower in the case of block-wise updates. 
Thus, the benefits of using block-wise updates and communications make the \DBP well suited for big-data optimization problems.

\subsubsection{Diminishing stepsizes}
In the case of diminishing (local) stepsizes, asymptotic convergence to the optimal cost can be reached.
\begin{theorem}\label{theorem:bound_diminishing}
    Let Assumptions~\ref{assumption:problem_structure},~\ref{assumption:communication},~\ref{assumption:separate},~\ref{assumption:stepsize},~\ref{assumption:random_variables} hold. 
    Then,
    \begin{equation*}
        \lim_{t\to\infty}\ff(x_i^t)-f(x^\star)=0,
    \end{equation*}
    for all $i\in\until{N}.$
\end{theorem}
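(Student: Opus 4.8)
The plan is to start from the bound~\eqref{eq:f_i_bound} of Corollary~\ref{corollary:bound}, which holds for \emph{arbitrary} stepsize sequences, and to show that under Assumption~\ref{assumption:stepsize} its right-hand side vanishes as $t\to\infty$. The structural observation that drives everything is that the bound is a bracket of four nonnegative contributions, all multiplied by the common prefactor $\left(\sum_{\tau=0}^t a_m^{\tau}\right)^{-1}$. Since $\sum_{\tau=0}^{\infty} a_m^{\tau}=\infty$ (the property of $a_m^{\tau}$ noted right after Assumption~\ref{assumption:stepsize}), this prefactor tends to $0$. Hence it suffices to prove that each bracketed term stays bounded as $t\to\infty$; dividing a bounded quantity by one diverging to $+\infty$ then kills every term.

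First I would dispatch the two ``constant-type'' contributions. The term $\E[V^0]$ is a finite constant, by Assumption~\ref{assumption:random_variables}\ref{assumption:initial} together with finiteness of the Bregman divergences at the bounded initial estimates, so $\E[V^0]/\sum_{\tau=0}^t a_m^{\tau}\to 0$. The term $\sum_{\tau=0}^t \frac{(a_M^{\tau})^2\bG}{2\sigma}$ converges to a finite limit because $\sum_{\tau=0}^{\infty}(a_M^{\tau})^2<\infty$ (again from the consequences of Assumption~\ref{assumption:stepsize}); divided by the diverging prefactor it too vanishes. Next come the two consensus-error contributions. For $\sum_{\tau=0}^t a_m^{\tau}\sum_{j=1}^N G_j\,\E[\|y_j^{\tau}-x_j^{\tau}\|]$ I invoke the statement recorded just after Lemma~\ref{lemma:xm} (derived from Lemma~\ref{lemma:yx}), namely $\lim_{t\to\infty}\sum_{\tau=0}^t a_m^{\tau}\E[\|y_j^{\tau}-x_j^{\tau}\|]<\infty$ for each $j$; summing the finitely many indices $j$ with weights $G_j$ preserves finiteness. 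For the remaining terms $\sum_{\tau=0}^t a_m^{\tau}\sum_{j=1}^N G_j\,\E[\|x_j^{\tau}-\bx^{\tau}\|]$ and $\sum_{\tau=0}^t a_m^{\tau} G\,\E[\|x_i^{\tau}-\bx^{\tau}\|]$ I invoke Lemma~\ref{lemma:xm}, which states exactly that $\lim_{t\to\infty}\sum_{\tau=0}^t a_m^{\tau}\E[\|x_j^{\tau}-\bx^{\tau}\|]<\infty$. Thus all four bracketed terms are bounded, and multiplying the bounded bracket by the vanishing prefactor gives $\limsup_{t\to\infty}\big(\ff(x_i^t)-f(x^\star)\big)\leq 0$.

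To upgrade this to the exact limit I would supply the matching lower bound. Every iterate $x_i^{\tau}$ remains feasible: the consensus step $y_i^{\tau}=\sum_j w_{ij}x_j^{\tau}$ is a convex combination of points of $X=X_1\times\dots\times X_B$, and each updated block is produced by $\prox_{\ell}$, which maps into $X_\ell$. Consequently $\E[f(x_i^{\tau})]\geq f(x^\star)$ for every $\tau$, so $\ff(x_i^t)-f(x^\star)=\min_{\tau\leq t}\E[f(x_i^{\tau})]-f(x^\star)\geq 0$ for all $t$. Combining this nonnegativity with the vanishing upper bound yields $\lim_{t\to\infty}\ff(x_i^t)-f(x^\star)=0$ for all $i\in\until{N}$.

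The only genuinely delicate point is confirming that each bracketed term \emph{converges} (stays bounded) rather than merely growing more slowly than $\sum_{\tau=0}^t a_m^{\tau}$. This is precisely what the square-summability of the stepsizes and the weighted consensus estimates of Lemma~\ref{lemma:xm} and Lemma~\ref{lemma:yx} were engineered to deliver, so once those are in hand the argument reduces to bookkeeping: identify the prefactor, verify each numerator term is bounded, and invoke the squeeze with the feasibility lower bound.
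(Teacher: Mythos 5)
Your proposal is correct and follows essentially the same route as the paper: the paper's proof is a one-line appeal to taking $t\to\infty$ in~\eqref{eq:f_i_bound} using Assumption~\ref{assumption:stepsize} (divergence of $\sum_\tau a_m^\tau$, square-summability of $a_M^\tau$) together with Lemma~\ref{lemma:xm} and the analogous bound on $\sum_\tau a_m^\tau\E[\|y_i^\tau-x_i^\tau\|]$, which is exactly the bookkeeping you spell out. Your additional feasibility/nonnegativity argument (needed to turn $\limsup\le 0$ into an exact limit) is left implicit in the paper, and making it explicit is a sound, not a divergent, elaboration.
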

\begin{proof}
    The proof follows by taking the limit for $t\to\infty$, and using Assumption~\ref{assumption:stepsize} and Lemma~\ref{lemma:xm} in~\eqref{eq:f_i_bound}.
\end{proof}
\begin{remark}
	We point out that, similarly to, e.g.,~\cite{duchi2012dual,dang2015stochastic}, one can introduce a running averaging mechanism by defining $\hat{x}_i^t=\tfrac{1}{t}\sum_{\tau=1}^t x_i^t$ and provide the convergence results in terms of $f(\hat{x}_i^t)-f(x^\star)$, in place of $\ff(x_i^t)-f(x^\star)$. The convergence proof would follow almost the same line with some adjustments in~\eqref{eq:xstar}-\eqref{eq:x_i-xstar} (see~\cite{dang2015stochastic}). However, running averaging mechanisms typically lead to a significantly slower convergence rate. In light of this, we provided our results in terms of $\ff(x_i^t)-f(x^\star)$.
\end{remark}
\section{Numerical example}\label{sec:experiment}
Consider a soft margin classification problem in which each agent $i\in\until{N}$ has $m_i$ training samples $q_i^1,\dots,q_{i}^{m_i}\in\R^d$ each of which has an associated binary label $b_{i}^r\in\{-1,1\}$ for all $r\in\until{m_i}$. The goal of the network is to build a linear classifier from the training samples, i.e., to find a hyperplane of the form $\{z\in\R^{d}\mid \langle \theta, z\rangle + \theta_0=0\}$, with $\theta\in\R^d$ and $\theta_0\in\R$, which better separates the training data. Let us define $x=[\theta^\top, \theta_0]^\top\in\R^{d+1}$ and $\hat{q}_{i}^r=[(q_{i}^r)^\top, 1]^\top$. Then, the solution to this problem can be determined by solving the following SVM problem
\begin{equation}\label{pb:regression}
    \begin{aligned}
        &\m_{x\in\R^{d+1}} 
        & & \sum_{i=1}^N\frac{1}{m_i}\sum_{r=1}^{m_i}\log\left(1+e^{-b_{i}^r\langle x,\hat{q}_{i}^r\rangle}\right)+\lambda\|x\|_1,
    \end{aligned}
\end{equation}
where $\lambda>0$ is the regularization weight. Problem~\eqref{pb:regression} can be written in the form of problem~\eqref{pb:problem} by defining $\xi_i^r=(\hat{q}_{i}^r,b_i^r)$ and 
\begin{align*}
    \E[h_i(x;\xi_i)]&=\frac{1}{m_i}\sum_{r=1}^{m_i} h_i(x;\xi_i^r)\\
    &=\frac{1}{m_i}\sum_{r=1}^{m_i}\bigg(\log\left(1+e^{-b_{i}^r\langle x,\hat{q}_{i}^r\rangle}\right)+\frac{\lambda}{N}\|x\|_1\bigg)
\end{align*}
for all $i\in\until{N}$. 
Notice that, as long as each data $\xi_i^r$ is uniformly drawn from the dataset, Assumption~\ref{assumption:problem_structure}\ref{assumption:unbiased} is satisfied.

In the next two sections we will test the \DBS in the presented scenario, first on a synthetic dataset and then on real-world dataset composed of text documents.
In both cases we consider a system with $N=48$ processors. The proposed distributed algorithm has been implemented by using the Python package DISROPT~\cite{farina2019disropt}, and each processor has been assigned an agent.

\subsection{Synthetic dataset}
In order to show how the algorithm performs for different number of blocks, let us consider a relatively low-dimensional problem with $x\in\R^{50}$ and evaluate the algorithm performance for different number of blocks, namely $B\in\{1,2,5,10,50\}$.
We generate a synthetic dataset (with $x\in\R^{50}$) composed of 240 points (taken from two clusters corresponding to labels $-1$ and $1$ respectively) and assign $5$ of them to each agent, i.e., $m_1=\dots=m_N=5$. 
Finally, we set $\lambda=0.1$, a common (constant) stepsize $\alpha=0.2$, $p_{i,\ell}=1/B$ for all $i$ and all $\ell$ and $s_i^t=1$ for all $i$ and all $t$.
Regarding the communication graph, it has is generated according to an Erd\H{o}s-R\`{e}nyi random model with connectivity parameter $p=0.3$. The corresponding weight matrix is built by using the Metropolis-Hastings rule.
The evolution of the cost error adjusted with respect to the number of blocks is reported in Figure~\ref{fig:cost} for the considered block numbers. The results confirm the discussion carried out in Section~\ref{subsec:rate} about the role of block communications. In fact, when normalizing the number of iterations with respect to the number of blocks, the convergence rates for the considered number of blocks are comparable.
Moreover, the convergence rate exhibits the properties shown in Section~\ref{subsec:rate}. In fact, it is linear at the beginning and becomes sublinear after some iterations. Moreover, as expected when using constant stepsizes, convergence is reached with a constant error.

\begin{figure}[t]
    %\begin{subfigure}{0.45\textwidth}
        \centering 
        \includegraphics[width=0.6\columnwidth]{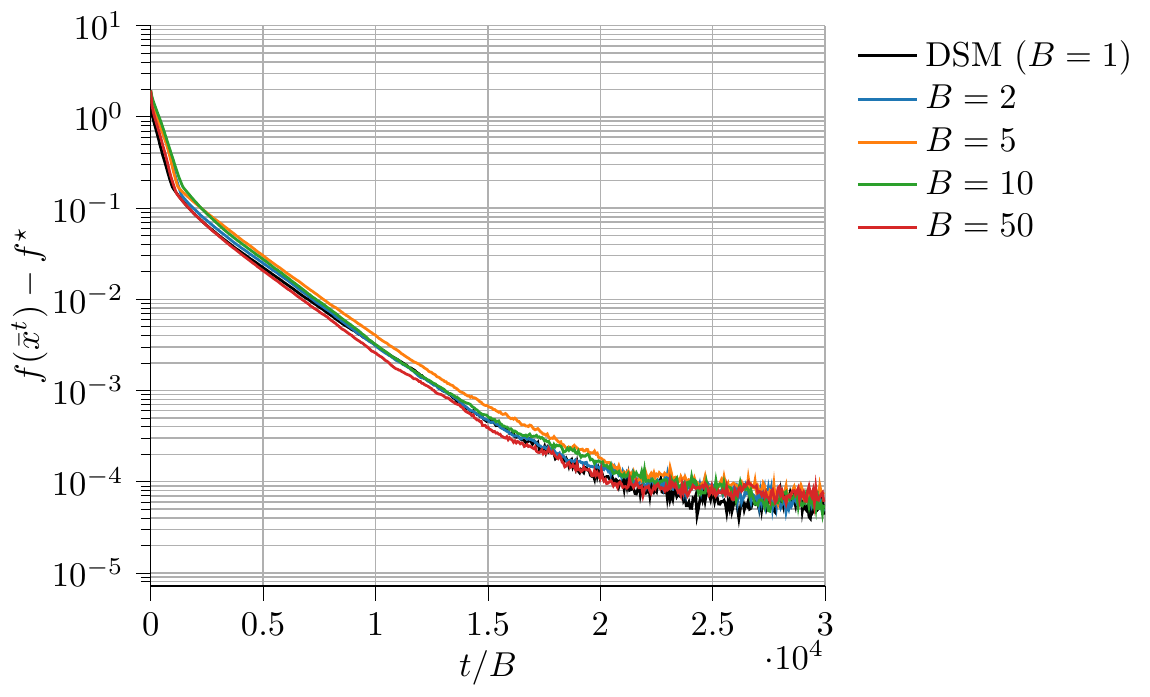}
        \caption{Numerical example: synthetic dataset. Evolution of the cost error for $B\in\{1,2,5,10,50\}$. The time scale is normalized on the number of blocks. The case $B=1$ coincides with the distributed subgradient method~\cite{nedic2009distributed}.}
        \label{fig:cost}
\end{figure} 

\subsection{Text classification}
Let us now consider a real-world scenario in which the local training samples are drawn from a dataset of texts. In particular, we pick the \emph{20 newsgroups dataset}, a dataset consisting of 18,846 newsgroup posts belonging to $20$ topics. Texts are represented by \textsc{tf-idf} on a dictionary of 130,107 words, so that each sample is a vector in $\R^{130,107}$. Agents have to learn to classify posts belonging to the class \emph{sci.med} from the others. Thus, in order to perform a binary classification, we assign the label $1$ to samples belonging to the class \emph{sci.med}, and $-1$ to all the other samples. In this scenario, the considered $48$ agents, are connected over a balanced directed graph, generated according to a binomial random model with connectivity parameter $p=0.5$ and each agent is awake with probability $p_{i,on}=0.95$. The entire dataset is split to assign almost the same number of samples to each agent. 
We run the algorithm for $4000$ iterations and for different number of blocks, namely $B\in\{10, 10^2,10^3, 10^4\}$. 
Moreover, we set $\lambda=0.001$ in problem~\eqref{pb:regression}, and we select a common (constant) stepsize $\alpha=0.5$ and $p_{i,\ell}=1/B$ for all $i$ and all $\ell$.
Differently from the previous example over synthetic data, in this case computing the exact (centralized) solution of the considered problem is computationally intractable, due to the high dimension of the decision variable ($n=$130,107) and the large number of samples ($\sum_{i=1}^N m_i=$18,146).
Thus, the performance of the algorithm are evaluated in terms of the accuracy $\Psi$ of the average of the produced solution estimates $\bar{x}^t$, i.e., the number of samples of the dataset that are correctly classified through the hyperplane defined by $\bar{x}^t$. The results are reported in Figure~\ref{fig:text_classification}.

\begin{figure}[!htb]
	\centering
	\includegraphics[width=0.6\columnwidth]{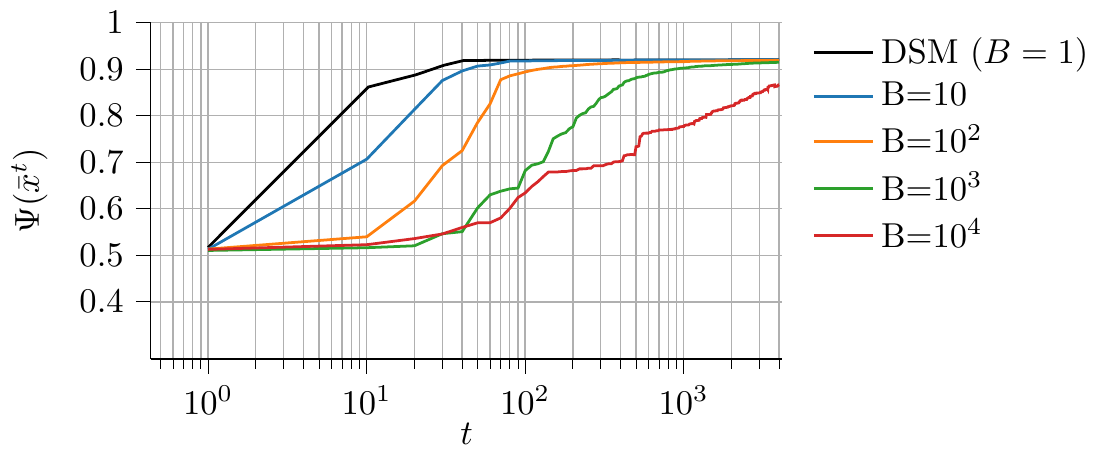}
	\caption{Numerical example: text classification. Evolution of the accuracy over the entire \emph{20 newsgroups} dataset.}
	\label{fig:text_classification}
\end{figure}

\section{Conclusions}\label{sec:conclusion}
In this paper, we introduced a class of distributed block proximal algorithms for solving stochastic big-data convex optimization problems over networks. In the addressed optimization set-up the dimension of the decision variable is very high and the (stochastic) cost function may be nonsmooth. The main strength of the proposed algorithms is that agents in the network can communicate a single block of the optimization variable per iteration.
Under the assumption of diminishing stepsizes, we showed that the agents in the network asymptotically agree on a common solution which is cost-optimal in expected value. When employing constant stepsizes approximate convergence is attained with a constant error on the optimal cost and an explicit convergence rate is provided. Special instances of the algorithm are presented for particular classes of problems.
Finally, the proposed algorithm, has been numerically evaluated on a distributed classification problem over both a synthetic dataset and a real, high-dimensional, text document dataset.

\bibliographystyle{IEEEtran}
%\balance
\bibliography{biblio}

\end{document}